\newtheorem{thm}{Theorem}
\newtheorem{defn}{Definition}
\newtheorem{exmp}{Example}
\newtheorem{prop}{Proposition}
\newcommand{\R}{{\mathbb R}}
\newcommand{\Rnn}{{\mathbb R}_{\ge 0}}
\newcommand{\Rp}{{\mathbb R}_{> 0}}
\newcommand{\tC}{{\mathrm C}}
\newcommand{\cD}{{\mathcal D}}
\newcommand{\cF}{{\mathcal F}}
\newcommand{\cG}{{\mathcal G}}
\newcommand{\cM}{{\mathcal M}}
\newcommand{\cN}{{\mathcal N}}
\newcommand{\cT}{{\mathcal T}}
\newcommand{\cU}{{\mathcal U}}
\newcommand{\bfone}{\mathbf 1}
\newcommand{\diag}{\text{diag}}
\newcommand{\Exp}{\mathbb{E}}
\newcommand{\led}{\leqq_{\rm d}}
\newcommand{\gepsd}{\geqq_{\rm psd}}
\newcommand{\lepsd}{\leqq_{\rm psd}}
\newcommand{\leicx}{\leqq_{\rm icx}}
\newcommand{\geicx}{\geqq_{\rm icx}}
\newcommand{\leidcx}{\leqq_{\rm idcx}}
\newcommand{\cFicx}{{\mathcal F}_{\rm icx}}
\newcommand{\cFidcx}{{\mathcal F}_{\rm idcx}}
\newcommand{\cFicv}{{\mathcal F}_{\rm icv}}
\newcommand{\cFd}{{\mathcal F}_{\rm d}}
\renewcommand{\Pr}{\mathbb P}
\title{On Monotonicity and Propagation of Order Properties}
\author{Aivar Sootla \thanks{The author is an F.R.S.--FNRS fellow with the Institut Montefiore, University of Li\`{e}ge,  B-4000, Li\`{e}ge Belgium {\tt\small aivar.a.sootla@ieee.org}. This work was mostly performed, while the author was a post-doctoral research associate at Imperial College London. The author gratefully acknowledges the support by the EPSRC Science and Innovation Award EP/G036004/1 and is additionally supported by a Belgian National Fund for Scientific Research F.R.S.--FNRS. The author would like to thank Mr Viktor Wolf and Prof Ludger R\"uschendorf for immediate replies to questions related to monotone diffusions and Dr David Angeli for attracting the author's attention to quasimonotonicity.}}
\begin{document}
\maketitle
\begin{abstract}
In this paper, a link between monotonicity of deterministic dynamical systems and propagation of order by Markov processes is established. The order propagation has received considerable attention in the literature, however, this notion is still not fully understood. The main contribution of this paper is a study of the order propagation in the deterministic setting, which potentially can provide new techniques for analysis in the stochastic one. We take a close look at the propagation of the so-called increasing and increasing convex orders. Infinitesimal characterisations of these orders are derived, which resemble the well-known Kamke conditions for monotonicity. It is shown that increasing order is equivalent to the standard monotonicity, while the class of systems propagating the increasing convex order is equivalent to the class of monotone systems with convex vector fields. The paper is concluded by deriving a novel result on order propagating diffusion processes and an application of this result to biological processes.
\end{abstract}
\section{Introduction}
Deterministic monotone systems (cf.~\cite{hirsch2005monotone}) have a considerable number of applications, such as control engineering~\cite{angeli2003monotone} and biology~\cite{sontag2007monotone} to name a couple. Properties of monotone control systems include: easy to compute bounds on reachability sets~\cite{ramdani2010computing}; computation of robust open-loop controls for particular applications~\cite{sootla2015pulsesarxiv}; easy to compute feedback controllers~\cite{meyer2013controllability, chisci2006asymptotic}; availability of structured model reduction methods~\cite{sootla2014projectionI, sootla2014projectionII}.

Monotonicity in the context of Markov processes has been introduced in~\cite{kalmykov1962partial,daley1968stochastically}. Since then this concept has been extensively studied and a number of applications has been discovered. For example, stochastically monotone processes play a central role in the perfect simulation algorithms~\cite{stoyan2002comparison}. Monotone processes are also extensively used in risk theory (cf.~\cite{stoyan2002comparison}), queueing theory (cf.~\cite{stoyan1983comparison}), and financial mathematics (cf.~\cite{bergenthum2007comparison}). Following the nomenclature in~\cite{bergenthum2007comparison}, the term \emph{propagation of order} will be preferred to the term \emph{stochastic monotonicity} in order to avoid confusion with deterministic definitions.  

The so-called $\cFd$, $\cFicx$ and $\cFicv$-orders have received a considerable attention in the literature. At the same time the $\cFicx$ and $\cFicv$-orders are not entirely understood in the context of stochastic processes. Therefore in order to advance the state-of-the-art in the stochastic setting, we take a closer look at the $\cFd$, $\cFicx$ and $\cFicv$ orders in the deterministic one. First, we show that in the deterministic setting the $\cFd$-order propagation is monotonicity in the sense of~\cite{angeli2003monotone}. Then it is shown that the systems propagating the $\cFicx$-order (respectively, the $\cFicv$-order) are monotone systems with a convex (respectively, concave) vector field. Note that, the class of systems propagating the $\cFicx$-order has been implicitly studied in~\cite{rantzer2014conmon} in the context of deterministic control systems, and~\cite{keller2010convexity} in the context of ordinary differential equations. 
However, the proofs presented in this paper are different from the existing ones, since they are influenced by the definitions of order propagation. Using techniques similar to ours it is possible to obtain additional propagation results for different orders as discussed in Section~\ref{s:prop-det}.

While studying processes propagating orders, the main application in mind was biological systems. It is well-known that cellular dynamics can be described by a Markov process with Poisson jumps~\cite{van2007stochastic}. Analysis of such processes is complicated due to the dependence of jumps on a Poisson distribution. Hence often Gaussian approximations of Poisson distributions are considered, which can result in Chemical Langevin Equation (CLE)~\cite{gillespie2000langevin} or Linear Noise Approximation (LNA)~\cite{van2007stochastic}. In this paper it is shown that only a collection of decoupled birth-date processes described by an LNA propagate the $\cFd$-order, which limits the scope of possible applications. On the other hand, it is also shown that all unimolecular reactions propagate the $\cFicx$-order and the order is propagated through mean and covariance matrix of the process. Hence the $\cFicx$-order is arguably better suited than the $\cFd$-order for comparison of unimolecular reactions processes. This result constitutes a step forward towards understanding Markov processes propagating orders. 

{\it Notation.} Let $\|\cdot\|_2$ stand for the Euclidean norm in $\R^n$, $X^{\ast}$ stand for a topological dual to $X$. Let $x\succeq_x y$ stand for a partial order in $\R^n$ induced by the non-negative orthant $\Rnn^n$. That is the relation $x\succeq_x y$ is true for vectors $x$ and $y$ if and only if $x_i \ge y_i$, for all $i$ (or $x-y \in \Rnn^n$). Let $x\gg_x y$ be true if and only if $x_i > y_i$, for all $i$ (or $x-y\in\Rp^n$).  For a general definition of the partial order we refer the reader to~\cite{smith2008monotone}. The partial order $u \succeq_u v$ on the space of control signals $u(t)$ is defined as an element-wise comparison $u_i(t) \ge v_i(t)$ for all $i$ and $t$. The notation $X \lepsd Y$ means that the matrix $Y-X$ is positive semidefinite. $\bfone$ stands for the vector of ones. 
The operator $\nabla f$ stands for the gradient of $f$, while $\nabla^2 f$ stands for the Hessian of $f$. Let $\tC^\infty(\cD\rightarrow\cM)$ be space of the smooth functions acting from $\cD$ to $\cM$. The operator $\Exp[X]$ stands for the expectation of $X$, while $\Exp[X|Y]$ stands for the conditional expectation of $X$ with respect to $\sigma$-algebra generated by $Y$. We write $X\sim Y$ if $X$ and $Y$ have the same probability distribution $\Pr$.
\section{Preliminaries}
\subsubsection{Monotone Control Systems} 
Consider a system
\begin{equation}
  \label{sys:det}
  \dot{x}=f(x,u)
\end{equation}
where $f:\cD\times \cU\rightarrow \R^n$, $\cD\subset\R^n$, $\cU\subset\R^m$, $u$ belongs to the space of $\R^m$-valued measurable functions $\cU_{\infty}$. The associated flow map is $\phi_f: \R_{\ge 0} \times \cD\times \cU_{\infty} \rightarrow \R^n$, which is denoted as $\phi_f(t; x, u)$ and is a solution to the system~\eqref{sys:det} with the initial state equal to $x$, and the control input $u\in\cU_{\infty}$. In order to guarantee the existence and uniqueness of solutions to~\eqref{sys:det}, throughout the paper we will assume that the vector field $f(x,u)$ is continuous in $(x,u)$ and locally Lipschitz continuous in $x$ uniformly on $u$. This means that for each compact sets $C_1\subset \cD$ and $C_2 \subset \cU$, let there exist a constant $k$ such that $\|f(\xi, u) - f(\zeta, u)\|_2 \le k \|\xi - \zeta\|_2$ for all $\xi,\zeta \in C_1$ and $u\in C_2$. 
\begin{defn}\label{def:mon}
  The system is said to be monotone with respect to $\Rnn^n\times\Rnn^m$ if one of these equivalent statements hold (the equivalence is shown in~\cite{angeli2003monotone}): 
\begin{enumerate}
\item\label{con:mon} {\bf Monotonicity.} For all $x\preceq_x y, u\preceq_u v\Rightarrow \phi_f(t;x, u)\preceq_x \phi_f(t;y, v)$ for all $t\in \R_{\ge 0}$. 
\item\label{con:kamke} {\bf Kamke Conditions.} Let $u$, $v$ in $\cU$ such that $u\preceq_u v$.  If $x$, $y$ in $\cD$ are such that $x_i = y_i$ for some $i$ and $x_j \le y_j$ for all $j\ne i$, then $f_i(x,u) \le f_i(y,v)$,
\item\label{con:qm} {\bf Quasimonotonicity.} Let $K = \Rnn^n$, and let $K^{\ast}$ be the topological dual to $K$, that is $K^{\ast}=\left\{ g \in (\R^n)^{\ast}| g(K) \ge 0\right\}$. For $x$, $y\in\cD$, $u$, $v\in\cU$ such that $x\preceq_x y$, $u\preceq_u v$ and $g \in K^{\ast}$ such that $g(x) = g(y)$, we have $g(f(x, u)) \le g(f(y, v))$.  
\end{enumerate}  
\end{defn}
A generalisation can be defined with respect to any orthant by mapping this orthant onto the positive one by a linear transformation $T: \R^n \rightarrow \R^n$, where $T = \diag((-1)^{\varepsilon_1}, \dots, (-1)^{\varepsilon_n})$ for some $\varepsilon_i$. 

\subsubsection{Markov Processes Propagating Orders}
Here we follow the development of stochastic monotonicity theory in~\cite{stoyan2002comparison,shaked2007stochastic}. We consider only uncontrolled processes for simplicity. But before we proceed, we require a few definitions.
\begin{defn}
 Let $g:\R^n \rightarrow \R^m$ be called increasing if for two vectors $x$,  $y$ such that $x\preceq_x y$, we have $g(x) \preceq_x g(y)$. 
 
Let $g:\R^n \rightarrow \R^m$ be called convex with respect to the order $\preceq_x$ if for two vectors $x$,  $y$ and a scalar $\lambda \in[0, 1]$,  we have 
\[
g(\lambda x + (1-\lambda) y) \preceq_x \lambda g(x) + g((1-\lambda) y).
\] 

Let $g:\R^n \rightarrow \R^m$ be called directionally convex with respect to the order $\preceq_x$ if for any $x_1 \preceq_x [ x_2, x_3] \preceq_x x_4$ and $x_1 + x_4 = x_2 + x_3$, we have:
\[ 
g(x_2) + g(x_3) \preceq_x g(x_1) + g(x_4).
\]

The classes $\cFd$, $\cFicx$ and $\cFidcx$ are classes of increasing, increasing convex and increasing directionally convex functions from $\tC^{\infty}(\R^n\rightarrow \R^m)$, respectively. 
\end{defn}

If the partial order is induced by the positive orthant $\R^n$, then the definition of convexity with respect to the order is a generalisation of convexity to vector-valued functions. However, if the orthant is not positive, then some individual $g_i(x,u)$ can be actually concave functions of the arguments, but still convex with respect to the order. For example, the function $f(x, y) = -x^2 + y$ is convex with respect to the order $\succeq_x$, if the relation $x\succeq_x y$ is implied by $x_1 \le x_2$, $y_1 \ge y_2$ (this order is induced by the orthant $\diag([-1, 1])\Rnn^n$). However, $f(x,y)$ is clearly concave in the classical sense. Note finally that a function from $\cFicx$ cannot increase and be convex with respect to two different partial orders.

Directionally convexity of a twice differentiable function $g$ in the standard order can be checked by inspecting the sign pattern of the Hessian. That is $g\in\cFidcx$ if and only if $\frac{\partial^2 g_k}{\partial x_i \partial x_j} \ge 0$ for all $k$, $i$, $j$. 

\begin{defn}\label{def:order} We say that $X \led Y$ (resp., $X \leicx Y$) [resp. $X\leidcx Y$], if 
\[
\Exp[g(X)] \le \Exp[g(Y)]
\]
holds for all $g$ in $\cFd$ (resp. $\cFicx$) [resp. $\cFidcx$] for which the integrals exist.
\end{defn}

Besides the classes $\cFd$, $\cFicx$, $\cFidcx$ different classes of functions can be used~\cite{shaked2007stochastic}. For example, the order induced by the class $\cFicv$ of increasing concave functions. We will leave a detailed discussion on $\cFicv$ and $\cFidcx$ orders beyond the scope of this paper. 
 The orders $\cFd$, $\cFicx$ have the following probabilistic interpretations and properties. 
\begin{prop}\label{prop:st-o} Let $X$, $Y$ be two random variables.
\begin{enumerate}
  \item $X\led Y$, if and only if $X \le Y$ almost surely. 
  \item $X\leicx Y$ if and only if $ X \led \Exp[Y | X] $
  \item Let $X\sim\cN(m_X, \Sigma_X)$, $Y\sim\cN(m_Y, \Sigma_Y)$. Then  $X\led Y$ if and only if $m_X \preceq_x m_Y$ and $\Sigma_X = \Sigma_Y$. If $m_X \preceq_x m_Y$ and $\Sigma_X \lepsd \Sigma_Y$, then $X\leicx Y$.
\end{enumerate}
  \end{prop}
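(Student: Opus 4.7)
The plan is to treat the three items in sequence, leaning on classical Strassen-type couplings for the converses and elementary Gaussian manipulations for the last claim.

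For item~1, the ``$\Leftarrow$'' direction is immediate: if $X\le Y$ almost surely on some coupling, then any $g\in\cFd$ forces $g(X)\le g(Y)$ a.s., and taking expectations gives $X\led Y$. The converse is Strassen's theorem for stochastic dominance. I would first upgrade the infinitesimal test to show $\Pr(X\in U)\le\Pr(Y\in U)$ for every open upper set $U\subset\R^n$, by approximating $\mathbf{1}_U$ monotonically from below by smooth functions in $\cFd$, and then invoke the Kamae--Krengel--O'Brien theorem to produce the required a.s.\ coupling.

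For item~2, the ``$\Leftarrow$'' direction uses $\cFicx\subset\cFd$ together with Jensen's inequality. For any $g\in\cFicx$,
\[
\Exp[g(X)] \le \Exp\bigl[g(\Exp[Y\mid X])\bigr] \le \Exp\bigl[\Exp[g(Y)\mid X]\bigr] = \Exp[g(Y)],
\]
where the first step is item~1 applied to $X\led\Exp[Y\mid X]$ and the second step is conditional Jensen. The converse is the increasing-convex analogue of Strassen's theorem, which I would quote from~\cite{shaked2007stochastic}: it constructs a coupling $(X,Y')$ with $Y'\sim Y$ and $X\le\Exp[Y'\mid X]$ a.s.

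Item~3 splits into two arguments. For the Gaussian $\led$ characterisation with equal covariances, draw $W\sim\cN(0,\Sigma_X)$ and set $X=m_X+W$, $Y=m_Y+W$; then $Y-X=m_Y-m_X\succeq_x 0$ pointwise and item~1 applies. For the converse, whenever $a\succeq_x 0$ and $\phi:\R\to\R$ is smooth and increasing, the composition $x\mapsto\phi(a^{\top}x)$ lies in $\cFd$, so $a^{\top}X\led a^{\top}Y$ in one dimension; since two univariate Gaussians with different variances have crossing CDFs and hence are not stochastically comparable, this forces $a^{\top}\Sigma_X a=a^{\top}\Sigma_Y a$ and $a^{\top}m_X\le a^{\top}m_Y$ for every $a\succeq_x 0$. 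The quadratic form $a^{\top}(\Sigma_Y-\Sigma_X)a$ vanishes on the open interior of the positive orthant, so by polarisation it vanishes identically, giving $\Sigma_X=\Sigma_Y$. For the $\leicx$ sufficiency under $\Sigma_X\lepsd\Sigma_Y$, I would build the coupling $Y':=X+(m_Y-m_X)+W$ with $W\sim\cN(0,\Sigma_Y-\Sigma_X)$ drawn independently of $X$; this is legitimate because $\Sigma_Y-\Sigma_X$ is PSD. Then $Y'\sim Y$ and $\Exp[Y'\mid X]=X+(m_Y-m_X)\succeq_x X$ a.s., so item~2 delivers $X\leicx Y$.

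The main obstacle is not conceptual but technical: a self-contained proof of the converses in items~1 and~2 requires the full Strassen coupling machinery together with an approximation of indicators of upper sets by elements of the smooth class $\cFd$. In practice I would cite these as standard from~\cite{shaked2007stochastic,stoyan2002comparison} and concentrate the write-up on the Gaussian calculations of item~3, which are the ingredient genuinely needed later in the paper.
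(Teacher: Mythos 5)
Your proposal is correct, but note that the paper does not actually prove this proposition: it explicitly defers the proofs to \cite{stoyan2002comparison,shaked2007stochastic} and offers only intuition, namely a univariate indicator-function sketch for item~1 and the conditional Jensen chain for the sufficiency of item~2. Where you and the paper overlap, the arguments coincide -- your Jensen step for item~2 is verbatim the paper's. What you add is the machinery the paper leaves implicit, and you add it correctly: you read the ``almost surely'' claims in items~1 and~2 as coupling statements (Strassen, Kamae--Krengel--O'Brien), which is the only reading under which the proposition is literally true, and you note the need to approximate indicators of open upper sets from below by smooth members of $\cFd$, a point the paper flags (``an indicator function is not smooth'') but does not resolve. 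Your item~3 is fully worked out where the paper merely gestures: the common-noise coupling $X=m_X+W$, $Y=m_Y+W$ for sufficiency of $\led$; the reduction to univariate Gaussians via $x\mapsto\phi(a^{\top}x)$ with $a\succeq_x 0$, the crossing-CDF fact, and polarisation of the quadratic form $a^{\top}(\Sigma_Y-\Sigma_X)a$ for necessity; and the independent-increment coupling $Y'=X+(m_Y-m_X)+W$ with $W\sim\cN(0,\Sigma_Y-\Sigma_X)$, which reduces the $\leicx$ claim to item~2. The one small point to tidy is the degenerate case $a^{\top}\Sigma_X a=0$ or $a^{\top}\Sigma_Y a=0$ in the necessity argument: a point mass and a nondegenerate Gaussian are never comparable in the $\led$ sense, so the conclusion $a^{\top}\Sigma_X a=a^{\top}\Sigma_Y a$ still holds, but it deserves a sentence rather than being subsumed under ``crossing CDFs.''
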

The  proofs or references to such can be found in~\cite{stoyan2002comparison}, ~\cite{shaked2007stochastic}. 
Instead, we try to provide some intuition behind these results. 

The proof of 1) for the univariate case is perhaps the easiest to reproduce. Consider the function $P(X > t)$, which is equal to $\Exp f_t(X)$, where $f_t(X)$ is an indicator function of the set $X>t$ and hence an increasing function. Therefore if $X\led Y$, then $X \le Y$ almost surely. Note that an indicator function is not smooth, but it is also possible to show the same result for smooth increasing functions. From $X \le Y$ almost surely, it follows directly that for all increasing functions $f(X) \le f(Y)$ almost surely, and hence clearly $X \led Y$. Note that in order to prove the necessity, a {\it nonlinear} increasing function was used, hence the usual definition monotonicity cannot be extended directly to the stochastic case, if one wants to preserve the order in the almost surely sense.
  
The sufficiency of 2) follows directly from Jensen's inequality as follows
\[
\Exp[g(X)]  \le \Exp[g(\Exp[Y | X])] \le  \Exp[\Exp[g(Y) | X]] =  \Exp[g(Y)]
\]
The necessity proof is omitted, since it is much more complicated than sufficiency. Note, however, that the necessity requires that $\Exp[g(Y)] \le \Exp[g(X)]$ for nonlinear convex $g$. 

The $\cFd$-order is the comparison in the almost surely sense, therefore comparing variables with different variances is problematic, which is confirmed in the point 3). On the other hand the $\cFicx$-order is a comparison of conditional expectations. Hence if $X$ generates a weaker $\sigma$-algebra then $Y$, then the variables are comparable. This condition for the Gaussians is translated into the comparison of the variances. This means that the $\cFicx$-order is {\it weaker} than $\cFd$ for Gaussian distributions.

The presented below definition of order propagating Markov processes is slightly different than the one in~\cite{bergenthum2007comparison}. This, however, does not affect any of the derivations for stochastic differential equations and does not affect the purpose of this definition. 
\begin{defn}\label{def:stoch-mon}
Let $\cF$ be one of $\{\cFd$, $\cFicx$, $\cFidcx\}$. A Markov process $X(t, x_0)$ with $X(0) = x_0$ propagates the order induced by the class $\cF$ , if for any $g$ from $\cF$ , the function $\Exp[g(X(t, X_0))| X(0) = x_0]$ also belongs to $\cF$ as a function of $x_0$.
\end{defn}

Propagation of $\cFd$, $\cFicx$ and $\cFidcx$ orders is usually referred to as {\it stochastic monotonicity}, {\it icx-monotonicity}, and {\it idcx-monotonicity}, respectively. We prefer using the term propagation of order in favour of the term stochastic monotonicity in order to avoid the use of the word "stochastic" in the context of deterministic systems. 

\section{Monotonicity and Order Propagation in the Deterministic Setting \label{s:prop-det}}

Deterministic systems can be seen as Markov processes with probability densities concentrated at one point. Hence, it is straightforward to formulate the order propagation in the deterministic setting. 
\begin{defn}
Let $\cF$ be one of $\{\cFd$, $\cFicx$, $\cFidcx\}$. The system~\eqref{sys:det} propagates the $\cF$-order if for any $g$ from  $\cF$, $x\in\cD$ and $u\in\cU_{\infty}$, we have that $g(\phi_f(t; x, u))$ belongs to $\cF$.
\end{defn}

The classes of functions $\cFd$, $\cFicx$, $\cFidcx$ are proper, convex cones in the space $\tC^{\infty}$. This indicates a clear connection between $\cFd$-order propagation and quasimonotonicity. However, in the stochastic case, it is necessary to include nonlinear functions in the classes $\cF$, $\cFicx$ and $\cFidcx$. Hence, we proceed without the linearity assumption on the functions $g$. Firstly, we establish the equivalence of $\cFd$-order propagation and monotonicity. 
\begin{thm} \label{thm:d-po}Consider the system~\eqref{sys:det}, then the following statements are equivalent:
\begin{enumerate}
\item \label{con:kamke-po} For $x$, $y\in\cD$, $u$, $v\in\cU$ such that $x \preceq_x y$, $u \preceq_u v$ and  $g\in\cFd$ such that $g(x) = g(y)$, we have $(\nabla g(x))^T f(x, u) \le (\nabla g(y))^T f(y, v)$.
\item \label{con:po} The system~\eqref{sys:det} propagates the $\cFd$-order. That is for all $g\in\cFd$,  we have that $g(\phi_f(t; \cdot, \cdot))\in\cFd$ for all $t$.
\item \label{con:mon} The system~\eqref{sys:det} is monotone according to Definition~\ref{def:mon}.
\end{enumerate}
\end{thm}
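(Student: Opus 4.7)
My plan is to prove the three statements equivalent by closing the cycle $(3) \Rightarrow (2) \Rightarrow (1) \Rightarrow (3)$ and invoking the equivalence between monotonicity and the Kamke conditions already recorded in Definition~\ref{def:mon} for the last implication. The direction $(3) \Rightarrow (2)$ should be essentially by definition: monotonicity gives $\phi_f(t;x,u)\preceq_x\phi_f(t;y,v)$ whenever $x\preceq_x y$ and $u\preceq_u v$, and composing with any $g\in\cFd$ preserves this inequality, so $g\circ \phi_f(t;\cdot,\cdot)$ is itself smooth and increasing, hence belongs to $\cFd$ for every $t$.

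For $(2) \Rightarrow (1)$, I would fix $g\in\cFd$ and data $x\preceq_x y$, $u\preceq_u v$ with $g(x)=g(y)$, and instantiate the propagation hypothesis using the \emph{constant} input signals $u(\cdot)\equiv u$ and $v(\cdot)\equiv v$, both of which clearly lie in $\cU_{\infty}$. Propagation yields $g(\phi_f(t;x,u)) \le g(\phi_f(t;y,v))$ for all $t\ge 0$, and by assumption the two sides coincide at $t=0$. Because the flow is classically $C^1$ in $t$ under a constant control, the right derivatives at $t=0^+$ exist, and an application of the chain rule to each side produces the required inequality $(\nabla g(x))^T f(x,u) \le (\nabla g(y))^T f(y,v)$.

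For $(1) \Rightarrow (3)$, I would specialise condition (1) to the coordinate projections $g_i(x) = e_i^T x$, which are smooth and (weakly) increasing, hence lie in $\cFd$. The constraint $g_i(x) = g_i(y)$ collapses to $x_i = y_i$, and together with $x\preceq_x y$ reproduces precisely the hypotheses of the Kamke condition in Definition~\ref{def:mon}. Since $\nabla g_i = e_i$, the assumed inequality degenerates to $f_i(x,u) \le f_i(y,v)$, which is the Kamke condition itself. Monotonicity in the sense of Definition~\ref{def:mon} then follows immediately from the equivalence between its items~\ref{con:mon} and~\ref{con:kamke}.

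The only genuinely delicate step will be the differentiation in $(2) \Rightarrow (1)$: one must ensure that the pointwise inequality at small $t$ can be converted into an inequality on derivatives despite the generality of the control space $\cU_{\infty}$. This is resolved by restricting attention to constant controls, which preserves admissibility and makes $t\mapsto\phi_f(t;x,u)$ classically differentiable at $0$ with derivative $f(x,u)$; no further regularity or approximation argument for time-varying inputs is then required.
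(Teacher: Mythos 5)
Your proposal is correct, but it follows a genuinely different route from the paper for the hard implication. You close the cycle $(3)\Rightarrow(2)\Rightarrow(1)\Rightarrow(3)$ and discharge $(1)\Rightarrow(3)$ by specialising condition (1) to the coordinate projections $g_i(x)=x_i$, recovering the Kamke conditions, and then invoking the Kamke--monotonicity equivalence already recorded in Definition~\ref{def:mon} (the Angeli--Sontag result). The paper instead proves $(1)\Rightarrow(2)$ \emph{directly}, without appealing to any external monotonicity theorem: it perturbs the vector field to $f+\bfone/m$ and the upper initial condition to $y+\bfone/m$, shows by a first-crossing-time contradiction that $g(x(t))<g(y^m(t))$ persists for all $t$ and all $g\in\cFd$ (at a putative crossing time $\tau$ the derivative of the gap would have to be nonpositive, yet condition (1) plus the strictly positive term $(\nabla\eta(y^m(\tau)))^T\bfone/m$ forces it to be strictly positive), and then lets $m\to\infty$; the remaining implications are the easy ones. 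The author explicitly notes that ``all the implications can be shown in a few lines by applying monotonicity results'' --- which is essentially your argument --- but deliberately produces the direct proof because the same perturbation technique is the engine of Theorems~\ref{thm:icx:po} and~\ref{thm:idcx:po}, where no off-the-shelf equivalence is available. So your version is shorter and perfectly valid for Theorem~\ref{thm:d-po} in isolation, at the cost of leaning on the cited equivalence and of not rehearsing the machinery reused later; your steps $(2)\Rightarrow(1)$ (differentiation at $t=0^+$ under constant controls) and $(3)\Rightarrow(2)$ (composition of increasing maps) coincide with the paper's.
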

\begin{proof}
All the implications can be shown in a few lines, by applying monotonicity results, however, we produce direct proofs without involving monotonicity results. This is done to illustrate the tools, which are used in the sequel.

\ref{con:kamke-po}) $\Rightarrow$~\ref{con:po}). Let $u\preceq_u v$,  $x\preceq_x y$. Consider a system $f_m(x,u) = f(x, u) + \bfone/m$, where $\bfone$ is a vector of ones. Let the initial condition be $y^{m} = y + 1/m \bfone$, and the flow corresponding to the vector field $f_m$ be $y^m(t) = \phi_m(t; y^{m}, v)$. Let also $x(t) = \phi_f(t; x, u)$. First, we show by contradiction that $g(x(t)) < g(y^{m}(t))$ holds for all functions $g$ from $\cFd$.  For a small $t$ the condition $g(x(t)) < g(y^m(t))$ obviously holds due to continuity of solutions to~\eqref{sys:det}. Assume there exists a time $\tau$ and a nontrivial function $\eta\in\cFd$ such that $\eta(x(s)) < \eta(y^m(s))$ for all $0\le s < \tau$ and $\eta(x(\tau)) = \eta(y^m(\tau))$ for some $m$. This implies that 
\[
\frac{d}{dt}\eta(x(t))\Bigl|_{t=\tau} \ge \frac{d}{dt}\eta(y^m(t))\Bigl|_{t=\tau}.
\]

On the other hand
{
\begin{multline*}
\frac{d}{dt}\left(\eta(y^m(t))-\eta(x(t)) \right)\Bigl|_{t=\tau} = \\
(\nabla\eta(y^m(\tau))^T (f(y^m(\tau),v) + \bfone/m) - \\
(\nabla\eta(x(\tau)))^T f(x(\tau),u)\ge^{(\ast)} \\ (\nabla\eta(y^m(\tau))^T \bfone/m >^{(\ast\ast)}0,
\end{multline*}}
where the inequality $(\ast)$ is due to~\eqref{con:kamke-po} and the inequality  $(\ast\ast)$ holds since $\eta$ is a nontrivial increasing function. Hence we have
\[
\frac{d}{dt}\left(\eta(y^m(t))-\eta(x(t)) \right)\Bigl|_{t=\tau} >0,
\]
and arrive at the contradiction. This implies that $g(\phi_f(t; x, u)) < g(\phi_m(t; y^m, v))$ for all $g\in\cFd$ and all finite $m$. Finally, by continuity of solutions with $m\rightarrow\infty$ we have that $g(\phi_f(t; x, u)) \le g(\phi_f(t; y, v))$, if $u\preceq_u v$,  $x\preceq_x y$,  $g\in\cFd$.

\ref{con:po}) $\Rightarrow$~\ref{con:kamke-po}). Let $g\in\cFd$  be such that for $x$, $y\in\cD$, such that $x \preceq_x y$, we have $g(x) = g(y)$. Let $u$, $v\in\cU$ $u \preceq_u v$. Due to~\ref{con:po}), we have that 
\[ 
g(\phi_f(t; x, u)) \le g(\phi_f(t; y, v))~\forall t\ge 0.
\]
Consequently
\begin{gather*}
\frac{d}{dt}(g(\phi_f(t; y, v)) -g(\phi_f(t; x, u)))\Bigl|_{t = 0} = \\
(\nabla g(y) )^T f(y, v) - (\nabla g(x))^T f(x, u) \ge 0,
\end{gather*}
which is the condition~\ref{con:kamke-po}).

\ref{con:kamke-po})$\Rightarrow$\ref{con:mon}) The claim follows by applying the condition~\ref{con:po}) with $g_i(x) = x_i$.

\ref{con:mon})$\Rightarrow$\ref{con:po}) Monotonicity implies that $\phi_f(t;x,u)$ is an increasing function in $(x,u)$. Since a composition of increasing functions is  increasing,  $g(\phi_f(t;x,u))$ is in $\cFd$.
\end{proof}

The next theorem provides necessary and sufficient conditions for the system to propagate the $\cFicx$-order.
\begin{thm} \label{thm:icx:po} Consider the system~\eqref{sys:det}. The following statements are equivalent:
\begin{enumerate}
\item \label{con:icx-kamke-po} 
\begin{enumerate}
\item \label{con:icx-kamke-inc} Let $x$, $y\in\cD$, $u$, $v\in\cU$, $g\in\cFicx$ be such that $x\preceq_x y$, $u\preceq_u v$. If $g(x) = g(y)$,  then
\[
(\nabla g(x))^T f(x, u) \le (\nabla g(y))^T f(y, v)
\]
\item \label{con:icx-kamke-cvx} Let $x$, $y$ $z\in\cD$,  $g\in\cFicx$ be such that  $g(z) = \mu g(x) + (1-\mu) g(y)$ for some $\mu\in[0,~1]$, then for all $u$, $v\in\cU$ we have:
\begin{multline*}
(\nabla g(z))^T f(z, \mu u + (1-\mu)v) \le \\ 
\mu (\nabla g(x))^T f(x, u) + (1-\mu) (\nabla g(y))^T f(y, v)
\end{multline*}
\end{enumerate}
\item \label{con:icx-po}The system~\eqref{sys:det} propagates the $\cFicx$-order. That is for any $g$ from  $\cFicx$, we have that $g(\phi_f(t; \cdot, \cdot))$ belongs to $\cFicx$ for all $t$.
\item \label{con:icx-kamke-useful} The system~\eqref{sys:det} is monotone according to Definition~\ref{def:mon} and $f_i(x,u)$ are convex functions in $(x,u)$.
\end{enumerate}
\end{thm}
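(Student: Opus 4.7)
My plan is to follow the same strategy as for Theorem~\ref{thm:d-po}: establish $(1)\Leftrightarrow (2)$ by an infinitesimal-to-global invariance argument paired with differentiation at $t=0$, and close the loop through $(1)\Leftrightarrow (3)$ by specialising $g$ to coordinate projections and invoking the classical convex-flow result for monotone systems.

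For $(1)\Rightarrow(2)$, the monotonicity half of $g\circ\phi_f\in\cFicx$ is carried over \emph{verbatim} from Theorem~\ref{thm:d-po} using condition~(1a); the new work is the convexity half. For this I would introduce the convexity deficit
\[
\eta(t) := \mu\, g(\phi_f(t;x,u)) + (1-\mu)\,g(\phi_f(t;y,v)) - g(\phi_f(t;\mu x+(1-\mu)y,\,\mu u+(1-\mu)v)),
\]
which satisfies $\eta(0)\succeq_x 0$ by convexity of $g$. Assuming for contradiction that $\eta$ first touches zero in some coordinate at time $\tau$, the triple $(\phi_f(\tau;x,u),\phi_f(\tau;y,v),\phi_f(\tau;\mu x+(1-\mu)y,\mu u+(1-\mu)v))$ satisfies the hypothesis of (1b), which yields $\eta'(\tau)\succeq_x 0$. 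As in Theorem~\ref{thm:d-po}, a perturbation $f_m=f+\bfone/m$ is then used to strictify the inequality and produce the desired contradiction, and the original statement is recovered by continuity as $m\to\infty$.

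For $(2)\Rightarrow(1)$ both parts follow by differentiating at $t=0$: (1a) emerges from $g(\phi_f(t;x,u))\le g(\phi_f(t;y,v))$ when $g(x)=g(y)$, exactly as in Theorem~\ref{thm:d-po}; (1b) emerges from the convexity half of $g\circ\phi_f\in\cFicx$ evaluated along a segment on which $g$ is affine, so that the convexity inequality is an equality at $t=0$ and its $t$-derivative is precisely the inequality in (1b) with $z=\mu x+(1-\mu)y$.

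Finally, $(1)\Leftrightarrow(3)$ is obtained by specialisation. Choosing $g_i(x)=x_i$ in (1a) recovers the Kamke condition of Definition~\ref{def:mon}, and the same choice in (1b) produces $f_i(\mu x+(1-\mu)y,\mu u+(1-\mu)v)\le \mu f_i(x,u)+(1-\mu)f_i(y,v)$, i.e.\ coordinatewise convexity of $f$ in $(x,u)$. Conversely, monotonicity together with convexity of $f$ implies that $\phi_f(t;\cdot,\cdot)$ is itself increasing and convex in $(x,u)$ as in~\cite{keller2010convexity,rantzer2014conmon}, and composing with an increasing convex $g$ keeps $g\circ\phi_f$ in $\cFicx$, yielding~(2). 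The hardest step I anticipate is the strictification in $(1)\Rightarrow(2)$: the additive shift $\bfone/m$ sufficed for monotonicity because $(\nabla g)^T\bfone/m>0$ for nontrivial increasing $g$, but for the convexity contradiction one needs a perturbation that simultaneously preserves condition (1b) and produces strict inequality at the contact time, which likely calls for a perturbation by a strictly convex increasing vector field rather than a mere constant shift.
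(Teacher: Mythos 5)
Your overall architecture coincides with the paper's: the increasing half of $(1)\Rightarrow(2)$ is inherited from Theorem~\ref{thm:d-po} via condition (1a), the direction $(2)\Rightarrow(1)$ is differentiation at $t=0$ where the defining inequality holds with equality, and the equivalence with statement $(3)$ is obtained by testing the infinitesimal conditions on the coordinate functions $g_i(x)=x_i$ and, conversely, by composing the increasing convex flow with an increasing convex $g$. The one step you leave unresolved --- and where your proposed fix points in the wrong direction --- is the strictification in the convexity half of $(1)\Rightarrow(2)$. You conjecture that a constant shift cannot simultaneously ``preserve condition (1b)'' and deliver strictness, and that a strictly convex increasing perturbation is needed. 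In fact the constant shift suffices, provided it is applied \emph{asymmetrically}: the paper perturbs only the two outer trajectories, setting $x^m=x+\bfone/m$, $y^m=y+\bfone/m$ and letting $x^m(t)$, $y^m(t)$ flow under the vector field $f+\bfone/m$, while the middle trajectory $z^\lambda(t)=\phi_f(t;\lambda x+(1-\lambda)y,\lambda u+(1-\lambda)v)$ keeps the unperturbed dynamics. At a first contact time $\tau$ (for some nontrivial $\eta\in\cFicx$ and some $\mu$) the hypothesis of (1b) holds for the \emph{unperturbed} field $f$ evaluated at $\bigl(z^{\mu}(\tau),x^m(\tau),y^m(\tau)\bigr)$, so there is nothing about (1b) that needs to survive the perturbation; (1b) bounds the $f$-part of the derivative of $\eta(z^{\mu}(t))-\mu\eta(x^m(t))-(1-\mu)\eta(y^m(t))$ by zero, and the leftover term
\[
-\bigl(\mu\,\nabla\eta(x^m(\tau))+(1-\mu)\,\nabla\eta(y^m(\tau))\bigr)^T\bfone/m<0
\]
supplies the strict inequality precisely because the shift does not appear on the middle trajectory --- had you shifted all three trajectories, the convex combination of the shifts would cancel exactly and no strictness would result. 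The contradiction then closes, and $m\to\infty$ recovers the non-strict statement.

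Two minor further points. First, for $(3)\Rightarrow(2)$ you outsource the convexity of the flow to~\cite{keller2010convexity,rantzer2014conmon}; the paper instead reruns its own $(1)\Rightarrow(2)$ argument with $\eta_i(x)=x_i$, keeping the proof self-contained, which is one of its stated aims. Second, in $(2)\Rightarrow(1\mathrm{b})$ note that condition (1b) is stated for an arbitrary $z$ on the level set $g(z)=\mu g(x)+(1-\mu)g(y)$, not only for $z=\mu x+(1-\mu)y$; your derivation along an affine segment of $g$ covers the essential case but you should combine the convexity of the flow with its monotonicity (already established) to handle general $z\preceq_x \mu x+(1-\mu)y$ on that level set, as the paper implicitly does.
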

\begin{proof} 
Throughout the proof of Theorem~\ref{thm:icx:po}, we will use the following notations $x^\lambda = \lambda x + (1-\lambda) y$, $u^{\lambda} = \lambda u + (1-\lambda)v$ and $z^\lambda(t) =\phi_f(t; x^\lambda, u^{\lambda})$.

\ref{con:icx-kamke-po}) $\Rightarrow$ \ref{con:icx-po}). Due to Theorem~\ref{thm:d-po} the condition~\eqref{con:icx-kamke-inc} implies that the flow $g(\phi_f(t;x,u))$ is an increasing function of $x$ and $u$ for all $g\in\cFicx$. Now we need to show that $g(\phi_f(t;x,u))$ is a convex function of $x$ and $u$.

Let $x^m = x + \bfone/m$, $y^m = y + \bfone/m$. Let also $\phi_m(t, x,u)$ be the flow generated by the system $\dot x = f(x,u) + \bfone/m$, $x^m(t) = \phi_m(t; x^m, u)$, $y^m(t) = \phi_m(t; y^m, v)$. First, we show that the condition
\begin{equation}
g(z^\lambda(t)) < \lambda g(x^m(t))+(1-\lambda)g(y^m(t)), \label{eq:icx-all_g}
\end{equation}
is valid for all $t$, all $\lambda\in[0,~1]$ and all functions $g\in\cFicx$. For small times $t$, we have 
\begin{multline*}
g(z^\lambda(t)) =  g(\phi_f(t; x_\lambda, u_\lambda)) < \\
g(\phi_f(t; x_\lambda + \bfone/m, u_\lambda)) \le \\
\lambda g(\phi_f(t; x^m, u)) + (1-\lambda) g(\phi_f(t; y^m, v)),
\end{multline*}
where the last inequality holds for $t=0$. Hence due to continuity of solutions, the condition~\eqref{eq:icx-all_g} holds for $t=0$ and its vicinity. Assume there exists a time $\tau$ and a nontrivial function $\eta\in\cFicx$ such that for all $\lambda\in[0,~1]$ and for all $0\le s < \tau$
\begin{equation}
\label{eta-cvx}
\eta(z^\lambda(s)) < \lambda \eta(x^m(s))+(1-\lambda)\eta(y^m(s)),
\end{equation}
while at the time $\tau$ for some $\mu\in[0,~1]$ we have:
\[
\eta(z^\mu(t)) = \mu \eta(x^m(\tau))+(1-\mu)\eta(y^m(\tau)). 
\]
This implies that
\begin{multline}\label{cont:ass}
\frac{d}{dt}(\eta(z^{\mu}(t)) - \mu \eta(x^m(\tau)) \\
-(1-\mu)\eta(y^m(\tau))) \Bigl|_{t = \tau}  \ge 0.
\end{multline}

On the other hand we have that 
\begin{multline*}
\frac{d}{dt}(\eta(z^{\mu}(t)) - \mu \eta(x^m(\tau)) 
-(1-\mu)\eta(y^m(\tau))) \Bigl|_{t = \tau}  = \\ 
(\nabla\eta(z^{\mu}(\tau)))^T f(z_\mu(\tau), u_\mu) - \\
\mu (\nabla \eta(x^m(\tau)))^T (f(x^m(\tau), u) + \bfone/m) - \\
(1-\mu) \nabla \eta(y^m(\tau))^T (f(y^m(\tau), v) + \bfone/m) \le^{(\ast)} \\ 
-(\mu \nabla \eta(x^m(\tau)) + (1-\mu) \nabla \eta(y^m(\tau)))^T \bfone/m < 0,
\end{multline*}
where the inequality $(\ast)$ is due to  the condition~\ref{con:icx-kamke-cvx}). We arrive at the contradiction with the condition~\eqref{cont:ass}. Hence the inequality \eqref{eq:icx-all_g} holds for all $t>0$, all $\lambda\in[0,~1]$ and all $g$. By continuity of solutions with $m\rightarrow\infty$ we have that 
\[
g(z^\lambda(t)) \le \lambda g(\phi_f(t; x, u))+(1-\lambda)g(\phi_f(t; y, v)), 
\]
which completes the proof.

\ref{con:icx-po}) $\Rightarrow$ \ref{con:icx-kamke-po}). By Theorem~\ref{thm:d-po} the condition~\ref{con:icx-kamke-inc}) is implied by the fact that the flow $g(\phi_f(t;x,u))$ is an increasing function of $x$ and $u$ for all $g\in\cFicx$. Now we need to show that if $g(\phi_f(t;x,u))$ is a convex function of $x$ and $u$ then \ref{con:icx-kamke-cvx}) is fulfilled.

Let $x$, $y$ $z\in\cD$, $u$, $v\in\cU$, $g\in\cFicx$ be such that $z\preceq_x \lambda x + (1-\lambda) y$, $u\preceq_u v$, and $g(z) = \mu g(x) + (1-\mu) g(y)$ for some $\mu\in [0,~1]$. Due to convexity of the flow the following holds for all $t$:
\[
g(z(t)) \le \mu g(\phi_f(t; x, u))+(1-\mu)g(\phi_f(t; y, v)), 
\]
Take the derivative at $t=0$ and obtain
\begin{multline}
(\nabla g(z))^T f(z, u^\mu) \le \mu (\nabla g(x))^T f(x,u)+\\(1-\mu)(\nabla g(y))^T f(y, v), 
\end{multline}
This implies the condition \ref{con:icx-kamke-cvx}) and completes the proof.

\ref{con:icx-kamke-po}) $\Rightarrow$ \ref{con:icx-kamke-useful}) The claim follows by testing the conditions~\ref{con:icx-kamke-po}) on $g_i(x) = x_i$.

\ref{con:icx-kamke-useful}) $\Rightarrow$  \ref{con:icx-po}) 
By repeating the proof for \ref{con:icx-kamke-po}) $\Rightarrow$ \ref{con:icx-po}) with $\eta_i(x) = x_i$, we get that the flow 
$\phi_f(t; x, u)$ is an increasing convex function in $x$, $u$. Since any composition of two increasing convex functions is known to be increasing convex, the function $g(\phi_f(t; x, u))$ belongs to $\cFicx$ for all $t>0$, which completes the proof. 
\end{proof}

Dynamical and control systems propagating the $\cFicx$ order were implicitly studied in~\cite{keller2010convexity} and~\cite{rantzer2014conmon}, respectively. The major difference between~\cite{keller2010convexity} and~\cite{rantzer2014conmon} and the presented proof is that it employs infinitesimal characterisations~\eqref{con:icx-kamke-po} and common techniques in the monotonicity theory. Furthermore, the presented proof can be modified in a straightforward manner in order to accommodate propagation of different orders. For instance, it is straightforward to modify our proof in order to obtain similar results for the $\cFicv$-order by flipping the inequalities and changing $1/m$ to $-1/m$. Moreover, it is possible to enforce different inequalities on the flow, for example it is possible to enforce directional convexity.
\begin{thm} \label{thm:idcx:po} Consider the system~\eqref{sys:det}. The following statements are equivalent:
\begin{enumerate}
\item \label{con:idcx-kamke-po} 
\begin{enumerate}
\item \label{con:idcx-kamke-inc} Let $x$, $y\in\cD$, $u$, $v\in\cU$, $g\in\cFidcx$ be such that $x\preceq_x y$, $u\preceq_u v$. If $g(x) = g(y)$,  then
\[
(\nabla g(x))^T f(x, u) \le (\nabla g(y))^T f(y, v)
\]
\item \label{con:idcx-kamke-dcvx} Let $x_1$, $x_2$ $x_3$, $x_4\in\cD$, be such that $x_1 \preceq_x x_2, x_3 \preceq_x x_4$ and $g(x_1) + g(x_4) = g(x_2) + g(x_3)$,  $g\in\cFidcx$. Then for all $u_1$, $u_2$, $u_3$, $u_4\in\cU$  such that $u_1 \preceq_u u_2, u_3 \preceq_x u_4$ and $u_1 + u_4 = u_2 + u_3$ we have:
\begin{multline*}
\nabla^T g(x_2) f(x_2, u_2) + \nabla^T g(x_3) f(x_3, u_3) \le \\
\nabla^T g(x_1) f(x_1, u_1) + \nabla^T g(x_4) f(x_4, u_4) 
\end{multline*}
\end{enumerate}
\item \label{con:idcx-po}The system~\eqref{sys:det} propagates the $\cFidcx$-order. That is for any $g$ from  $\cFidcx$, we have that $g(\phi_f(t; \cdot, \cdot))$ belongs to $\cFidcx$ for all $t$.
\item \label{con:idcx-kamke-useful} The system~\eqref{sys:det} is monotone according to Definition~\ref{def:mon}, and $f_i(x,u)$ are directionally convex functions in $(x,u)$ for all $i$.
\end{enumerate}
\end{thm}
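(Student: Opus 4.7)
The plan is to mirror the proof of Theorem~\ref{thm:icx:po}, with the two-point convex combination replaced by the four-point configuration $(x_1,x_2,x_3,x_4)$ characteristic of directional convexity. The cycle I would close is (\ref{con:idcx-kamke-po}) $\Rightarrow$ (\ref{con:idcx-po}) $\Rightarrow$ (\ref{con:idcx-kamke-po}), together with the bi-implication (\ref{con:idcx-kamke-po}) $\Leftrightarrow$ (\ref{con:idcx-kamke-useful}).

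For (\ref{con:idcx-kamke-po}) $\Rightarrow$ (\ref{con:idcx-po}), I would fix a quadruple with $x_1\preceq_x[x_2,x_3]\preceq_x x_4$ and $x_1+x_4=x_2+x_3$, and an analogous quadruple of inputs, introduce perturbed flows $x_i^m(t)=\phi_m(t;x_i^m,u_i)$ under $\dot x=f(x,u)+\bfone/m$ with initial data designed to preserve both the sum condition and the ordering, and consider the directional-convexity gap
\[
D_\eta^m(t)=\eta(x_1^m(t))+\eta(x_4^m(t))-\eta(x_2^m(t))-\eta(x_3^m(t)).
\]
Monotonicity of the perturbed flow, which follows from (\ref{con:idcx-kamke-inc}) via Theorem~\ref{thm:d-po}, preserves the four-point ordering along the trajectory. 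Assuming toward contradiction that $\tau>0$ is the first time at which $D_\eta^m$ vanishes for some nontrivial $\eta\in\cFidcx$, the configuration at $\tau$ meets the hypotheses of (\ref{con:idcx-kamke-dcvx}) (in particular the equality of $\eta$-values), so differentiating $D_\eta^m$ at $\tau$ yields
\[
\dot D_\eta^m(\tau)\ge \bigl(\nabla\eta(x_1^m(\tau))+\nabla\eta(x_4^m(\tau))-\nabla\eta(x_2^m(\tau))-\nabla\eta(x_3^m(\tau))\bigr)^T\bfone/m.
\]
A strict-positivity argument for this residual then contradicts $\dot D_\eta^m(\tau)\le 0$, and letting $m\to\infty$ with continuous dependence on initial data recovers the propagated directional-convexity inequality for the original system.

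The converse (\ref{con:idcx-po}) $\Rightarrow$ (\ref{con:idcx-kamke-po}) is obtained by direct differentiation at $t=0$: condition (\ref{con:idcx-kamke-inc}) follows from Theorem~\ref{thm:d-po} since every $g\in\cFidcx$ is in particular increasing, while (\ref{con:idcx-kamke-dcvx}) comes from differentiating the propagated inequality $g(\phi_f(t;x_2,u_2))+g(\phi_f(t;x_3,u_3))\le g(\phi_f(t;x_1,u_1))+g(\phi_f(t;x_4,u_4))$ at $t=0$ under the equality hypothesis on $g$-values. For (\ref{con:idcx-kamke-po}) $\Leftrightarrow$ (\ref{con:idcx-kamke-useful}), I would test the infinitesimal conditions on $g_i(x)=x_i$: condition (\ref{con:idcx-kamke-inc}) on coordinate functions recovers the Kamke conditions for monotonicity, while (\ref{con:idcx-kamke-dcvx}) on coordinate functions reduces to componentwise directional convexity of $f$; the converse uses non-negativity of $\nabla g$ for $g\in\cFidcx$ combined with the dcx property of $f$, together with closure of $\cFidcx$ under composition with increasing directionally convex maps (verifiable from the chain rule, since $\partial_i\partial_j(g\circ\phi)$ decomposes into non-negative products of first and second partials of $g$ and $\phi$).

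The main obstacle is the strict-positivity of the residual in the first-time-of-violation step. In the icx setting the residual $(\nabla\eta(y^m(\tau)))^T\bfone/m$ is automatically strict because $\eta$ is nontrivially increasing, but the dcx residual is a signed sum of gradients at four points and can vanish identically (for instance when $\nabla\eta$ is constant, such as for affine $\eta$, since then the $\pm\bfone/m$ contributions cancel by the sum condition). I anticipate handling this by perturbing the initial conditions asymmetrically as $x_1^m=x_1-\bfone/m$ and $x_4^m=x_4+\bfone/m$ with $x_2,x_3$ unperturbed (preserving $x_1^m+x_4^m=x_2+x_3$), which injects a strictly positive gap at $t=0$ via the second-order dcx correction $\tfrac{1}{m^2}\bfone^T(\nabla^2\eta(x_1)+\nabla^2\eta(x_4))\bfone\ge 0$ for nontrivial $\eta$. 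Should a clean universal strict-positivity argument resist, the implication can instead be routed through (\ref{con:idcx-kamke-useful}) $\Rightarrow$ (\ref{con:idcx-po}) by applying the perturbation argument only to the coordinate functions $\eta=g_i$ to show that the flow itself is increasing directionally convex in $(x,u)$, and then invoking the composition closure of $\cFidcx$ to conclude.
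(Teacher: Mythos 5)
Your overall architecture matches the paper's: the same cycle of implications, the same first-violation-time contradiction for (\ref{con:idcx-kamke-po})$\Rightarrow$(\ref{con:idcx-po}), differentiation at $t=0$ for the converse, testing the infinitesimal conditions on the coordinate functions $g_i(x)=x_i$, and composition closure of $\cFidcx$ to return from (\ref{con:idcx-kamke-useful}) to (\ref{con:idcx-po}). You have also correctly located the crux: with a symmetric perturbation of all four trajectories, the derivative residual at the first violation time is the signed sum $\bigl(\nabla\eta(x_1)+\nabla\eta(x_4)-\nabla\eta(x_2)-\nabla\eta(x_3)\bigr)^T\bfone/m$, whose sign is indeterminate, so no contradiction is obtained.

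However, neither of your two proposed resolutions closes this gap. The asymmetric initial perturbation $x_1^m=x_1-\bfone/m$, $x_4^m=x_4+\bfone/m$ only affects the size of the gap at $t=0$ (and even there yields a correction that is merely $\ge 0$, not $>0$); it does nothing to the \emph{derivative} of the gap at the first hitting time $\tau$, which is where strictness is needed. If all four trajectories evolve under the same dynamics (perturbed or not), the $\bfone/m$ contributions to the derivative still enter with the same signed coefficients and can cancel --- they cancel \emph{exactly} for the coordinate functions $\eta=g_i$, so your fallback route through condition (\ref{con:idcx-kamke-useful}) inherits the identical problem. The paper's fix is different and simpler: perturb the vector field of \emph{only one} trajectory. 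Concretely, $x_4^m(t)$ is taken to be the flow of $\dot x=f(x,u)+\bfone/m$ started from $x_4+\bfone/m$, while $x_1(t)$, $x_2(t)$, $x_3(t)$ are flows of the unperturbed system. Condition (\ref{con:idcx-kamke-dcvx}) is then applied to the configuration at time $\tau$ --- its hypotheses require only the orderings (preserved by monotonicity) and the equality of $\eta$-values (which holds by the definition of $\tau$); note that the state-sum condition is not a hypothesis of (\ref{con:idcx-kamke-dcvx}) and indeed no longer holds after the perturbation. The residual in the derivative then collapses to the single-signed term $-\bigl(\nabla\eta(x_4^m(\tau))\bigr)^T\bfone/m<0$ for nontrivial increasing $\eta$, giving the contradiction. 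With that replacement your argument goes through; the remaining steps of your proposal are essentially the paper's.
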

\begin{proof}
\ref{con:idcx-kamke-po}) $\Rightarrow$ \ref{con:idcx-po}). Due to Theorem~\ref{thm:d-po} the condition~\eqref{con:idcx-kamke-inc} implies that the flow $g(\phi_f(t;x,u))$ is an increasing function of $x$ and $u$ for all $g\in\cFidcx$. Now we need to show that $g(\phi_f(t;x,u))$ is a directionally convex function of $x$ and $u$.

Consider initial conditions $x_1$, $x_2$, $x_3$, $x_4$, such that  $x_1 \preceq_x x_2, x_3\preceq_x x_4$ and $x_1 + x_4 = x_2 + x_3$, as well as control signals  $u_1$, $u_2$, $u_3$, $u_4$, such that  $u_1 \preceq_u u_2, u_3\preceq_x u_4$ and such that $u_1 + u_4 = u_2 + u_3$. Let $x^m = x_4 + 1/m \bfone$, $x_4^m(t) = \phi_m(t, x_4^m, u_4)$ be the flow generated by the system $\dot x = f(x,u)+1/m\bfone$. Let also $x_i(t) = \phi_f(t; x_i, u_i)$. First, we show that the condition
\begin{equation}
g(x_2(t)) + g(x_3(t)) < g(x_1(t)) + g(x_4^m(t))  \label{eq:idcx-all_g}
\end{equation}
is valid for all $t$ and all functions $g\in\cFidcx$. For $t=0$, we have
\begin{equation}
g(x_2(t)) + g(x_3(t)) \le g(x_1(t)) + g(x_4(t))  
\end{equation}
and $g(x_4(t)) < g(x_4^m(t))$. Hence, the inequality~\eqref{eq:idcx-all_g} is satisfied for small $t=0$. Assume there exists a time $\tau$ and a nontrivial function $\eta\in\cFidcx$ such that for all $0\le s < \tau$
\begin{equation}
\label{eta-dcvx}
\eta(x_2(t)) + \eta(x_3(t)) < \eta(x_1(t)) + \eta(x_4^m(t)) 
\end{equation}
while at some time $\tau$ we have:
\[
\eta(x_2(\tau)) + \eta(x_3(\tau)) = \eta(x_1(\tau)) + \eta(x_4^m(\tau)) 
\]
This implies that
{\small
\begin{multline*}
\frac{d}{dt}(\eta(x_2(\tau)) + \eta(x_3(\tau)) - \eta(x_1(\tau)) - \eta(x_4^m(\tau))) \Bigl|_{t = \tau}  \ge 0.
\end{multline*}
}

On the other hand we have that 
\begin{multline*}
\frac{d}{dt}(\eta(x_2(\tau)) + \eta(x_3(\tau)) - \eta(x_1(\tau)) - \eta(x_4^m(\tau))) \Bigl|_{t = \tau}  = \\
\nabla^T \eta(x_2(\tau)) f(x_2(\tau), u_2(\tau)) + \\ 
\nabla^T \eta(x_3(\tau)) f(x_3(\tau), u_3(\tau)) - \\
\nabla^T\eta(x_1(\tau)) f(x_1(\tau), u_1(\tau)) - \\
\nabla^T \eta(x_4(\tau)) (f(x_4(\tau), u_4(\tau)) + \bfone/m) \le^{(1)} \\
-\nabla^T \eta(x_4(\tau))\bfone/m < 0
\end{multline*}
where the inequality $(1)$ is due to  the condition~\ref{con:idcx-kamke-dcvx}). We arrive at the contradiction. Hence the inequality \eqref{eq:idcx-all_g} holds for all $t>0$ and all $g$. By continuity of solutions with $m\rightarrow\infty$ we have that 
\[
g(x_2(t)) + g(x_3(t)) \le g(x_1(t)) + g(x_4(t)) 
\]
which completes the proof.

\ref{con:idcx-po}) $\Rightarrow$ \ref{con:idcx-kamke-po}). By Theorem~\ref{thm:d-po} the condition~\ref{con:idcx-kamke-inc}) is implied by the fact that the flow $g(\phi_f(t;x,u))$ is an increasing function of $x$ and $u$ for all $g\in\cFidcx$. Now we need to show that if $g(\phi_f(t;x,u))$ is a directionally convex function of $x$ and $u$ then \ref{con:idcx-kamke-dcvx}) is fulfilled.

Let $x_1$, $x_2$ $x_3$, $x_4\in\cD$, be such that $x_1 \preceq_x x_2, x_3 \preceq_x x_4$ and $x_1 + x_4 = x_2 + x_3$,  $g\in\cFidcx$, let also $u_1$, $u_2$, $u_3$, $u_4\in\cU$  be such that $u_1 \preceq_u u_2, u_3 \preceq_x u_4$ and $u_1 + u_4 = u_2 + u_3$. Due to directional convexity of the flow the following holds for all $t>0$:
\[
g(x_2(t)) + g(x_3(t)) \le g(x_1(t)) + g(x_4(t)), 
\]
Take the derivative at $t=0$ and obtain
\begin{multline}
\nabla g(x_2)^T f(x_2, u_2) + \nabla g(x_3)^T f(x_3, u_3) \le \\
\nabla g(x_1)^T f(x_1, u_1) + \nabla g(x_4)^T f(x_4, u_4) 
\end{multline}
This implies the condition \ref{con:idcx-kamke-dcvx}) and completes the proof.

\ref{con:idcx-kamke-po}) $\Rightarrow$ \ref{con:idcx-kamke-useful}) The claim follows by testing the conditions~\ref{con:idcx-kamke-po}) on $g_i(x) = x_i$.

\ref{con:idcx-kamke-useful}) $\Rightarrow$  \ref{con:idcx-po}) 
By repeating the proof for \ref{con:idcx-kamke-po}) $\Rightarrow$ \ref{con:idcx-po}) with $g_i(x) = x_i$, we get that the flow $\phi_f(t; x, u)$ is an increasing directionally convex function in $x$, $u$. Since any composition of two increasing directionally convex functions is known to be increasing directionally convex (cf.~\cite{shaked2007stochastic}), the function $g(\phi_f(t; x, u))$ belongs to $\cFidcx$ for all $t>0$, which completes the proof. 
\end{proof}
\begin{exmp} Consider a model of a toggle switch, which was a pioneering circuit in synthetic biology~\cite{Gardner00}. The genetic toggle switch is composed of two mutually repressive genes \emph{LacI} and \emph{TetR}. We consider a control-affine model, which is consistent with a toggle switch actuated by light induction~\cite{Levskaya09}. The dynamical equations can be written as follows.
  \begin{equation}
    \label{eq:ts-par}
    \begin{aligned}
      \dot x_1 &= \frac{p_1}{1 + (x_2/p_2)} - p_3 x_1 + u, \\
      \dot x_2 &= \frac{p_4}{1 + (x_1/p_5)} - p_{6} x_2, 
    \end{aligned}
  \end{equation}
It is easy to verify that for nonnegative $p_i$ the model is monotone with respect to orthant $\diag([1,~-1])\Rnn^2$ and the nonlinearties are convex. However, this model is not propagating the $\cFicx$-order, since the vector is not convex with respect to the orthant $\diag([1,~-1])\Rnn^2$! Indeed, using the change of variables $y_1 = x_1$, $y_2 = - x_2$, it is straightforward to obtain the following cooperative model in the standard order:
  \begin{equation}
    \label{eq:ts-par-2}
    \begin{aligned}
      \dot y_1 &= f_1(y_1, y_2) = \frac{p_1}{1 - (y_2/p_2)} - p_3 y_1 + u, \\
      \dot y_2 &= f_2(y_1, y_2) = \frac{- p_4}{1 + (y_1/p_5)} - p_{6} y_2, 
    \end{aligned}
  \end{equation}
where $y_1(t)\ge 0$, $y_2(t) \le 0$ for all $t\le 0$. It is easy to see that the function $f_1$ is convex, while the function $f_2$ is concave in the orthant $\diag([1,~-1])\Rnn^2$. Therefore Theorem~\ref{thm:icx:po} cannot be applied to the system~\eqref{eq:ts-par-2}.

Consider another change of variables $z_1 = x_1$, $z_2 = 1/x_2$, and obtain the following dynamical system:
  \begin{equation}
    \label{eq:ts-par-3}
    \begin{aligned}
      \dot z_1 &= g_1(z_1, z_2) = \frac{p_1 z_2}{1/p_2 + z_2} - p_3 z_1 + u, \\
      \dot z_2 &= g_2(z_1, z_2) = \frac{- z_2^2 p_4}{1 + z_1/p_5} + p_{6} z_2, 
    \end{aligned}
  \end{equation}
where 
\begin{gather*}
\frac{\partial g_1(z_1, z_2)}{\partial z_2} =  \frac{p_1/p_2}{ (1/p_2 + z_2)^2} \\ 
\frac{\partial g_2(z_1, z_2)}{\partial z_1} = \frac{z_2^2 p_4/p_5}{(1 + z_1/p_5)^2}  \\
\nabla^2 g_1(z_1, z_2) = \begin{pmatrix}
0 & 0 \\ 0 & \dfrac{- 2 p_1/p_2}{ (1/p_2 + z_2)^3}
\end{pmatrix}   \\
\nabla^2 g_2(z_1, z_2) = \begin{pmatrix}
\dfrac{- 2 z_2^2 p_4/p_5^2}{(1 + z_1/p_5)^3} & \dfrac{2 z_2 p_4/p_5}{(1 + z_1/p_5)^2} \\ \dfrac{2 z_2 p_4/p_5}{(1 + z_1/p_5)^2} & \dfrac{- 2 p_4}{1 + z_1/p_5}
\end{pmatrix}   
\end{gather*}
It is straightforward to verify that the vector field satisfies the monotonicity condition, while the Hessians have one zero eigenvalue and one nonpositive eigenvalue. This means that the vector field is concave in the standard order. This implies that the original system~\eqref{eq:ts-par} has a very specific property: every component of the flow mapping is convex in $x_1$ and concave in $x_2$ in the order induced by the orthant $\diag([1,~-1])\Rnn^n$.
\end{exmp}

\section{Remarks on Diffusion Processes Propagating the $\cFd$ and $\cFicx$ orders}
Consider the diffusion process in the form of the stochastic differential equation
\begin{multline}
X(t,X_0) = X_0 + \int_0^t f(X(\tau, X_0)) d\tau + \\
\int_0^t \sigma(X(\tau, X_0)) d W, \label{eq:sde}
\end{multline}
where $W$ is Brownian motion and the integrals are of the It\^{o} type. Let also $c(x)$ be equal to $\sigma(x)^T \sigma(x)$.  Throughout the section, it is assumed that solutions to~\eqref{eq:sde} exist, which is usually implied by some additional assumptions on $f$ and $c$ (see~\cite{oksendal2003stochastic}).
\begin{prop}[Theorem~5.3. in~\cite{herbst1991diffusion}] \label{prop:fd-prop}
Let the solutions to~\eqref{eq:sde} exist. The process $X(t,X_0)$ is propagating the $\cFd$-order if and only if the functions $f_i(x)$ are increasing in $x_k$ for all $k\ne i$, the function $c_{i j}(x)$ depends only on $x_i$ and $x_j$.
\end{prop}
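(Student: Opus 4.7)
The plan is to adapt the PDE-based contradiction scheme of Theorems~\ref{thm:d-po}--\ref{thm:idcx:po} to the backward Kolmogorov equation governing the diffusion. For $g\in\cFd$ set $u(t,x)=\Exp[g(X(t,x))]$; propagation of the $\cFd$-order is equivalent to $u(t,\cdot)\in\cFd$ for every $t\ge 0$. Under standard regularity $u$ satisfies $\partial_t u=\cL u$ with $u(0,\cdot)=g$, where the infinitesimal generator is
\[
\cL u(x)=\sum_i f_i(x)\,\partial_{x_i}u(x)+\tfrac{1}{2}\sum_{i,j}c_{ij}(x)\,\partial_{x_ix_j}u(x),
\]
so the proposition reduces to identifying the structural assumptions on $(f,c)$ under which $\cL$ preserves $\cFd$ on the initial layer.

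For sufficiency I would replay the first-crossing scheme of Theorem~\ref{thm:d-po}. Perturb $g$ to the strictly increasing $g_m=g+\bfone^T x/m$, call the corresponding solution $u_m$, and suppose for contradiction that there exist $x\preceq_x y$ and a first time $\tau>0$ with $u_m(\tau,x)=u_m(\tau,y)$ while $u_m(s,\cdot)\in\cFd$ strictly for $s<\tau$. Working with $v_k=\partial_{x_k}u_m$, which solves the linear PDE obtained by differentiating $\partial_t u_m=\cL u_m$, the Kamke-type condition $\partial_{x_k}f_i\ge 0$ for $i\ne k$ handles the first-order off-diagonal coupling exactly as in the deterministic case, while the hypothesis that $c_{ij}$ depends only on $x_i,x_j$ kills the cross-term $\sum_{i,j}(\partial_{x_k}c_{ij})\,\partial_{x_ix_j}u_m$ whenever $k\notin\{i,j\}$. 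The surviving diagonal contributions can be absorbed into $\cL v_k$, after which a parabolic strong maximum principle applied to $v_k$ yields the contradiction, and letting $m\to\infty$ recovers $u(t,\cdot)\in\cFd$.

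For necessity I would differentiate the characterisation at $t=0$. Testing $\cL g$ on the coordinate functions $g(x)=x_i$ immediately yields the Kamke condition $\partial_{x_k}f_i\ge 0$ for $k\ne i$ via the argument of Theorem~\ref{thm:d-po}. To pin down the dependence pattern of $c$, I would apply the generator to a smoothed increasing approximation of $g(x)=x_ix_j$ and vary the argument only in a direction $x_k$ with $k\notin\{i,j\}$: since the first-order contribution is already controlled, any nonzero $\partial_{x_k}c_{ij}$ would produce a first-order-in-$t$ violation of increasingness for small $t$, contradicting propagation. The main obstacle is precisely this second-order part of the generator, which has no deterministic analogue; the signs of the cross derivatives $\partial_{x_ix_j}u_m$ at a first-crossing point are not available for free, and the essential insight is that \emph{$c_{ij}$ depending only on $x_i,x_j$} is the sharp algebraic pattern that makes the Kamke-style argument go through at second order.
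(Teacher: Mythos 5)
You should first note that the paper contains no proof of Proposition~\ref{prop:fd-prop}: it is imported verbatim as Theorem~5.3 of \cite{herbst1991diffusion}, so there is no in-paper argument to compare against. Your sketch is essentially a reconstruction of the Herbst--Pitt strategy rather than an application of the paper's deterministic machinery: propagate nonnegativity of $v_k=\partial_{x_k}u$ through the differentiated backward equation, using cooperativity of the zeroth-order coupling $\sum_i(\partial_{x_k}f_i)\,v_i$ (the Kamke condition) and the fact that $\partial_{x_k}c_{ij}=0$ for $k\notin\{i,j\}$. The algebraic observation you isolate is the right one and is worth making explicit: the surviving terms $(\partial_{x_k}c_{kj})\,\partial_{x_k}\partial_{x_j}u=(\partial_{x_k}c_{kj})\,\partial_{x_j}v_k$ are first-order in $v_k$ \emph{itself} by symmetry of mixed partials, hence can be absorbed into the drift of the operator acting on $v_k$ without disturbing a maximum principle. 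This buys a proof that runs parallel to Theorem~\ref{thm:d-po} (an infinitesimal, generator-level argument) instead of the discretisation-and-coupling route the paper uses for Proposition~\ref{prop:icx-mon-sde}.

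Two places need repair before the sketch is a proof. First, the opening of your sufficiency argument conflates two schemes: once you pass to the fields $v_k$, positing a ``first time $\tau$ with $u_m(\tau,x)=u_m(\tau,y)$ for some $x\preceq_x y$'' is both unnecessary and unusable --- as you yourself note, a touching point in $(x,y)$ gives no sign information on $\partial_{x_i}\partial_{x_j}u_m$. The contradiction hypothesis should instead be a first time at which $\min_k\inf_x v_k(\tau,x)$ reaches zero, i.e.\ a nonnegativity-preservation claim for the weakly coupled cooperative parabolic system satisfied by $(v_1,\dots,v_n)$; one must then verify the hypotheses of a maximum principle for such systems with possibly degenerate and unbounded coefficients, which is where the real technical content of \cite{herbst1991diffusion} lies, and the perturbation $g\mapsto g+\bfone^Tx/m$ does not by itself regularise the second-order part of the operator. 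Second, in the necessity direction, deducing the \emph{equality} $\partial_{x_k}c_{ij}=0$ from the inequality $\partial_{x_k}\cL g\ge 0$ at points where $\partial_{x_k}g=0$ requires increasing test functions realising both signs of $\partial_{x_i}\partial_{x_j}g$ there while annihilating the drift and third-order contributions; your sketch gestures at this but leaves it implicit. With those repairs the argument is sound.
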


Regarding the propagation of the $\cFicx$-order the literature is not as rich as in the $\cFd$-order case. In fact, only sufficient conditions on propagation of the $\cFicx$-order by a stochastic diffusion are known~\cite{bergenthum2007comparison}. The next result is a generalisation of Lemma~2.5 and Theorem~2.6 in~\cite{bergenthum2007comparison}, where the authors required $f_i$ to be increasing functions in all arguments. 
\begin{prop}\label{prop:icx-mon-sde} Let the strong solutions to~\eqref{eq:sde} exist. Suppose the functions $f_i(x)$ are convex and such that the functions $f_i(x)$ are increasing in $x_k$ for all $k\ne i$, the functions $\sigma(x)$ are increasing convex functions in the space of semidefinite matrices. Then the process $X(t,X_0)$ propagates the $\cFicx$-order.
\end{prop}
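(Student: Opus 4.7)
The strategy is to reduce the claim to a statement about the Kolmogorov backward PDE associated with~\eqref{eq:sde} and then mimic the contradiction argument from the proof of Theorem~\ref{thm:icx:po}. Specifically, for a scalar test function $g\in\cFicx$ define $u(t,x) = \Exp[g(X(t,x))]$. Under standard regularity on $f$ and $\sigma$, the function $u$ is smooth in $x$ and satisfies the backward equation
$$\partial_t u(t,x) = \cL u(t,x) := \nabla u(t,x)^T f(x) + \tfrac{1}{2}\tr\bigl(c(x)\nabla^2 u(t,x)\bigr),\ u(0,\cdot)=g.$$
The goal is to show that $u(t,\cdot)\in\cFicx$ for all $t\ge 0$; by linearity of the expectation this extends to the $\R^m$-valued test functions of Definition~\ref{def:stoch-mon} componentwise and yields the claimed $\cFicx$-propagation.

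The pivotal step is to establish two infinitesimal Kamke-type conditions on $\cL$ that mirror conditions~\ref{con:icx-kamke-inc}) and~\ref{con:icx-kamke-cvx}) of Theorem~\ref{thm:icx:po}. Namely, whenever a smooth $h\in\cFicx$ saturates one of the defining inequalities of $\cFicx$ -- either the monotone one at $x\preceq_x y$ with $h(x)=h(y)$, or the convex one at a convex combination -- the corresponding inequality for $\cL h$ must still hold. For the drift contribution $\nabla h^T f$, this is exactly Theorem~\ref{thm:icx:po} applied to the deterministic ODE $\dot x = f(x)$, whose hypotheses~\ref{con:icx-kamke-useful}) (monotonicity and convexity of $f_i$) are precisely the ones assumed here. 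For the diffusion contribution $\tfrac{1}{2}\tr(c\,\nabla^2 h)$, convexity of $h$ gives $\nabla^2 h\succeq 0$, so the term is nonnegative; moreover, $c = \sigma^T\sigma$ inherits increasing-convexity in the PSD order from the corresponding property of $\sigma$, which supplies the needed monotonicity and convexity of the trace term in $x$.

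Armed with these infinitesimal conditions, I would then run the perturbation-and-contradiction argument of Theorem~\ref{thm:icx:po}: add a perturbation $\bfone/m$ to the drift, let $u^m$ be the backward solution for the perturbed process, and suppose the strict version of either the monotone or the convex inequality for $u^m(t,\cdot)$ first fails at some time $\tau$. Using $\partial_t u^m = \cL^m u^m$ at $\tau$, together with the two Kamke-type conditions applied to $u^m(\tau,\cdot)$ and the strictly positive perturbation, one derives a strict contradiction analogous to the inequalities marked $(\ast)$ and $(\ast\ast)$ in the proofs of Theorems~\ref{thm:d-po} and~\ref{thm:icx:po}. Sending $m\to\infty$ and invoking continuity of the semigroup with respect to the drift then delivers $u(t,\cdot)\in\cFicx$, as required.

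The main obstacle is that the monotone and convex parts of the argument cannot be decoupled: the monotone Kamke check for $u$ at a boundary point of the increasing cone relies on $\nabla^2 u\succeq 0$ (so that the diffusion term has the right sign and monotonicity in $x$), and the convex Kamke check in turn uses positivity of $\nabla u$. Consequently, the contradiction argument must be run on the joint property ``$u^m(t,\cdot)\in\cFicx$'' simultaneously, rather than separately for the increasing and the convex part. A secondary technical point is to verify carefully that the hypothesis on $\sigma$ (``increasing convex in the space of semidefinite matrices'') indeed transfers to the required PSD-monotonicity and PSD-convexity of $c=\sigma^T\sigma$; this relies on standard properties of operator-monotone and operator-convex maps restricted to the PSD cone, together with the precise interpretation of the matrix-valued ordering used in the statement of the proposition.
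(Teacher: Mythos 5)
Your route --- passing to the Kolmogorov backward equation for $u(t,x)=\Exp[g(X(t,x))]$ and running the Kamke/first-crossing argument of Theorem~\ref{thm:icx:po} at the level of the generator $\cL$ --- is genuinely different from the paper's proof, which instead adapts Bergenthum--R\"uschendorf: discretise time, show that the one-step Euler transition $\cT X = X+\Delta f(X)+\sigma(X)W$ maps $X_1\leicx X_2$ to $\cT X_1\leicx \cT X_2$ for all sufficiently small $\Delta$ (using the characterisation $X_1\leicx X_2$ iff $X_1\led \Exp[X_2\,|\,X_1]$ from Proposition~\ref{prop:st-o} together with Jensen's inequality), and then pass to the limit $\Delta\to 0$. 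The discretisation deliberately avoids every regularity question about $u$ and never requires an infinitesimal condition on $\cL$.

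As sketched, your argument has a concrete gap at the second-order term. The Kamke-type condition you need at a saturation point of the monotone inequality ($x\preceq_x y$, $h(x)=h(y)$, $h\in\cFicx$) includes $\tr\bigl(c(x)\nabla^2 h(x)\bigr)\le \tr\bigl(c(y)\nabla^2 h(y)\bigr)$. Positivity of the Hessians together with $c(x)\lepsd c(y)$ only yields $\tr\bigl(c(x)\nabla^2 h(x)\bigr)\le\tr\bigl(c(y)\nabla^2 h(x)\bigr)$; the remaining term $\tr\bigl(c(y)(\nabla^2 h(y)-\nabla^2 h(x))\bigr)$ is not sign-definite for a general $h\in\cFicx$, and nothing in your sketch controls how the Hessian of $h$ varies between the compared points. (The case $y\gg_x x$ can be rescued by noting that an increasing convex $h$ with $h(x)=h(y)$ is constant, equal to its global minimum, on a neighbourhood of $x$, forcing $\nabla h(x)=0$ and $\nabla^2 h(x)=0$; but the boundary case where some coordinates of $x$ and $y$ coincide --- exactly where the Herbst--Pitt structural condition on $c$ enters for the $\cFd$-order --- and, above all, the convex saturation condition $h(z)=\mu h(x)+(1-\mu)h(y)$ are left unaddressed, and the latter is where the real work lies.) A second unacknowledged obstacle is that your first-crossing argument now runs on the infinite-dimensional object $u^m(t,\cdot)$ over the unbounded set of admissible pairs and triples of points: the existence of a first touching time and point needs compactness or growth control that the deterministic proof gets for free from finite-dimensional ODE flows, and writing $\cL u$ classically needs smoothness of $u$ that does not follow from ``strong solutions exist.'' So the proposal is a plausible programme rather than a proof, and the paper's time-discretisation sidesteps precisely these difficulties.
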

\begin{proof}
The authors of~\cite{bergenthum2007comparison} proved their result by first discretising the process with time steps $\Delta$. Then they showed that this discrete time process propagates the $\cFicx$ order \emph{ for all $\Delta>0$}. They have finalised the proof by showing that the discrete process propagates the $\cFicx$ order, and with $\Delta\rightarrow 0$ converges to $X(t,X_0)$. Here we modify the proof of Lemma~2.5 in~\cite{bergenthum2007comparison} by showing that the conditions in Proposition~\ref{prop:icx-mon-sde} are sufficient for the discretised process to propagate the $\cFicx$-order for \emph{a sufficiently small $\Delta$}.

Let $t$ be in $[t_0,T]$. Discretise $[t_0,T]$ into $K$ equidistant points for a sufficiently small distance $\Delta$ between the points. Consider the following discretised process 
\begin{align*}
\tilde X_{K, t_{i+1}} = \tilde X_{K, t_{i}} + f(\tilde X_{K, t_{i}}) \Delta + \sigma(\tilde X_{K, t_{i}}) W_{i}
\end{align*}
where $W_{i}\sim \cN(0,\Delta I)$. Let 
\begin{gather*}
\tilde G_K(t, x) = \Exp\left[ h(\tilde X_{K, t})| \tilde X_{K,t_0} = x\right] \\
G(t, x) = \Exp\left[ h(X_{t})| X_{t_0} = x\right]
\end{gather*}
Due to the existence of strong solutions to~\eqref{eq:sde} we have that $\tilde G_K(t,\cdot)\rightarrow G(t,\cdot)$ with $k\rightarrow\infty$.

We first show the propagation of the $\cFicx$-order by a one-step transition of the discrete process, which is an operator in the following form
\begin{equation}
\cT X = X + \Delta f(X) + \sigma(X) W,
\end{equation}
where $W\sim\cN(0,\Delta I)$. Hence, we need to show that for any two random variables $X_1$ and $X_2$ such that $X_1 \leicx X_2$ we have
\begin{align}
\cT X_1 \leicx \cT X_2
\end{align}
Unlike~\cite{bergenthum2007comparison}, where this property is shown for any positive $\Delta$, we will show it only for small increments $\Delta$.  Due to Proposition~\ref{prop:st-o}, we can pick the variables $X_1$ and $X_2$ such that $X_1 \led\Exp(X_2 | X_1)$, without loss of generality. Before showing monotonicity of $\cT$, we need to show some small auxiliary results. 

Let us show that the following relation holds for any realisation $x_1$ of the random variable $X_1$
\begin{multline}\label{ineq:icx-diff}
\Exp\left[X_2 + \Delta f(X_2) + \sigma(X_2) W|x_1\right]  \geicx\\ x_1 + \Delta f(x_1) + \sigma(x_1) W
\end{multline} 
Since the function $f$ is convex so is the function $x + \Delta f(x)$ for any fixed $\Delta$. Hence, by Jensen's inequality we have $\Exp(X_2 + \Delta f(X_2)| x_1) \succeq_x \Exp(X_2|x_1) + \Delta f(\Exp(X_2)| x_1)$. Due to Lipschitz continuity of $f(x)$, we can find a small enough $\Delta$ such that $\Delta(f(x)-f(y)) \le x-y$ for $x\succeq_x y$. Hence the function $x + \Delta f(x)$ is also increasing in $x$ for a sufficiently small $\Delta$. Finally, we have that for a small $\Delta$
\begin{gather*}
x_1 + \Delta f(x_1)\preceq_x \Exp(X_2|x_1) + \Delta f(\Exp(X_2)| x_1) \\
\preceq_x \Exp(X_2 + \Delta f(X_2)| x_1)
\end{gather*}
since $x_1 \preceq_x \Exp(X_2| x_1)$ for every realisation $x_1$ of the random vector $X_1$. By Jensen's inequality and monotonicity of $\sigma$ we have $\Exp(\sigma(X_2)|x_1) \gepsd \sigma(\Exp(X_2|x_1)) \gepsd\sigma(x_1)$. Moreover, $\Exp(\sigma(X_2)|x_1)\Exp(\sigma(X_2)|x_1)^T \gepsd \sigma(x_1)\sigma(x_1)^T$. Therefore, by Proposition~\ref{prop:st-o} we have $\sigma(x_1) W \leicx \Exp(\sigma(X_2)|x_1) W$ and consequently the comparison~\eqref{ineq:icx-diff} holds. 

Now consider the following chain of inequalities, where Jensen's inequality is used again 
\begin{gather*}
\Exp h(\cT X_2) = \Exp\,\Exp \left[h(\cT X_2 )| X_1\right] \ge \Exp \left[ h(\Exp \left[\cT X_2 | X_1 \right]) \right] = \\
\Exp h\left(\Exp \left[X_2 + \Delta f(X_2) + \sigma(X_2) W|X_1 \right] \right) = \\
\Exp\int h\left( \Exp\left[X_2 + \Delta f(X_2) + \sigma(X_2) W|x_1\right] \right) P(dx_1)\ge \\
\Exp\int h\left( x_1 + \Delta f(x_1) + \sigma(x_1) W \right) P(dx_1) =  \Exp h(\cT X_1).
\end{gather*}
where the last inequality holds due to~\eqref{ineq:icx-diff} by definition of the $\cFicx$-order propagation. Hence we have shown that $\cT X_2\geicx \cT X_1$, if $X_2 \geicx X_1$. Now it is left to show that $\tilde \cG_K(t_0,x)$ belongs to $\cFicx$ for sufficiently small increments $\Delta$. By the Markovian property of $\tilde \cG_K(t_0, \cdot)$ we have 
\begin{multline*}
\tilde\cG_K(t, y) = \Exp(h(\tilde X_{K,t})| \tilde X_{K,t_0} = y) = \\
\Exp h(\cT_{t-\Delta}\dots \cT_{t_0} y)
\end{multline*}
Let $Y$ be a Bernoulli distribution for $y_1$, $y_2$ with a distribution $P_Y(Y = y_1) = \alpha$,  $P_Y(Y= y_2) = (1-\alpha)$, and the mean $\Exp Y = \alpha y_1 + (1 -\alpha) y_2$. Clearly $\Exp Y \leicx Y$. Using the propagation of the $\cFicx$-order by the operator $\cT$ for all $t\in[t_0,T]$ and a small $\Delta$ we get:
\begin{multline*}
\tilde \cG_K(t, \alpha y_1 + (1-\alpha) y_2) = \tilde \cG_K(t, \Exp Y) = \\
\Exp h(\cT_{t-\Delta} \dots \cT_{t_0}\Exp Y) \le  \Exp h(\cT_{t-\Delta} \dots \cT_{t_0} Y) = \\
\Exp \tilde \cG_K(t, Y) =  \alpha \tilde \cG_K(t, y_1 ) + (1-\alpha)\tilde \cG_K(t, y_2 ) 
\end{multline*}
Now for every $x\le y$ by the propagation of the $\cFicx$-order by $\cT$ we get:
\begin{multline*}
\tilde \cG_K(t, x) = \Exp h(\cT_{t-\Delta} \dots \cT_{t_0} x) \le \\
\Exp h(\cT_{t-\Delta} \dots \cT_{t_0} y) = 
\tilde \cG_K(t, y)
\end{multline*}
Hence, $\tilde \cG_K(t,\cdot)$ belongs to the class $\cFicx$, which implies that $\cG(t,\cdot)$ belongs to $\cFicx$ as well and completes the proof.
\end{proof}

\subsection{Remarks on Biochemical Processes Propagating Orders}
Biochemical networks are typically modelled by a continuous time infinite Markov chain, probability distribution function of which is computed by a Chemical Master Equation (CME):
\[ 
 \frac{\partial \Pr(n, t)}{\partial t} = \sum_{i=1}^R  v_i (n - S_{i}) \Pr(n - S_i, t)- v_i(n) \Pr(n, t),
\]
where $R$ is the number of reactions; 
column vectors $S_{i}$ form a stoichiometry matrix $S$; $v_i$ are the reaction rates; $n$ is a vector containing the number of molecules $n_j$ of species $j$; finally, $\Pr(n,t)$ is the probability of the vector of the number of molecules equal to $n$ at time $t$. This equation cannot be solved analytically except for a handful of cases and numerical simulations are extremely expensive. In order to lower the complexity of simulations, different approximations of a CME are often employed. One of such approximations is a Chemical Langevin Equation (CLE)~\cite{gillespie2000langevin}:
\begin{equation}
  \label{eq:cle}
   d X = S v(X)dt + S V(X) d W, 
\end{equation}
where $d W$ is a vector of Brownian motions, and $V(X) = {\diag}(\sqrt{v(X)}))$. In this case, one approximates a Poisson distribution with a Gaussian, which is valid when the average number of firing reactions per unit time is large. One of the technical problems with the CLE is that $v(X)$ can take negative values with a small, but non-zero probability. In order to avoid these technical problems, we illustrate the propagation of order on a Linear Noise Approximation, which is valid for reactions in a large volume~\cite{van2007stochastic}.
\begin{gather}
  \label{eq:lna-m}   d x = S v(x)dt, x(0) = x_0\\
  \label{eq:lna-n}   d \eta = J(x) \eta dt + S V(x) dW,
\end{gather}
where $W$ is a Brownian motion, $x(t)$ is a deterministic variable, $V_{i i}(x) = \sqrt{v_i(x)}$, $V_{i j}(x) = 0$ for all $i\ne j$, and $J(x)$ is a Jacobian of $S v(x)$. 

Consider a set of unimolecular reactions, that is 
\begin{gather*}
X_i \rightarrow_{k_i} \varnothing, \quad i=1,\dots n \\
\qquad X_i \rightarrow_{k_{i + n j}} X_j,\quad i,j=1,\dots, n 
\end{gather*}
with the vector of reaction rates in the following form 
\begin{gather*}
v_i(x) = k_{i} x_{i}, \quad i = 1,\dots,n \\
v_{i + n j} = k_{i + n j} x_i,\quad i,j=1,\dots,n, i\ne j
\end{gather*}
It is straightforward to verify that we will have $S v(x) = A x$, where 
\begin{gather*}
A_{i j} =\begin{cases}
k_{i + n j}, & i\ne j,\\
-k_i -\sum_{j=1,j\ne i} k_{j + n i}& i=j.
\end{cases}
\end{gather*}
and $S V(x) V(x) S^T$ is linear in $x$. consider the elements of the diagonal of $S V(x)^2 S^T$, which are equal to $c_{i i} =\sum\limits_{j} S_{i j}^2 v_j(x)$. Note that by Proposition~\ref{prop:fd-prop} the function $c_{i i}$ can depend only on $x_i$, which implies that if there exists $v_j$ which depends on $x_m$ for $m\ne i$, then $S_{i j} = 0$. Hence the vector $S v(x)$ can be rewritten as $w(x)$, where $w_i(x)$ depends only on $x_i$, which is a birth-death process. It is straightforward to verify that in general only birth-death processes propagate the $\cFd$-order, which limits the scope of potential applications. At the same time, we have the following result for the propagation of the $\cFicx$-order.
\begin{prop}\label{prop:lna-prop}
A linear noise approximation of any network of unimolecular reactions is propagating the $\cFicx$-order. 
\end{prop}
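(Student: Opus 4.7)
The plan is to reduce Proposition~\ref{prop:lna-prop} to Proposition~\ref{prop:icx-mon-sde} by viewing the LNA as a single It\^o SDE on the stacked state $z = (x^T,\eta^T)^T \in \R^{2n}$,
\[
dz = F(z)\,dt + \Sigma(z)\,dW, \qquad F(z) = \begin{pmatrix} Sv(x) \\ J(x)\eta \end{pmatrix}, \qquad \Sigma(z) = \begin{pmatrix} 0 \\ SV(x) \end{pmatrix}.
\]
The paragraph preceding the proposition has already established, for a unimolecular network, that $Sv(x) = Ax$ with Metzler $A$ (its off-diagonals are $A_{ij} = k_{i+nj}\ge 0$), and hence $J(x) \equiv A$. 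It therefore suffices to verify the drift and diffusion hypotheses of Proposition~\ref{prop:icx-mon-sde} on this $2n$-dimensional system.

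For the drift, $F(z) = \diag(A,A)\,z$ is linear in $z$, hence convex, and each component $F_i$ is increasing in $z_k$ for every $k\ne i$ by the Metzler property of $A$. For the diffusion I would look at the covariance rate
\[
c(z) = \Sigma(z)\Sigma(z)^T = \begin{pmatrix} 0 & 0 \\ 0 & S\,\diag(v(x))\,S^T \end{pmatrix}, \qquad S\,\diag(v(x))\,S^T = \sum_\ell k_\ell\, x_{i(\ell)}\, S_{\cdot\ell} S_{\cdot\ell}^T,
\]
where $i(\ell)$ denotes the reactant of reaction $\ell$. Each $S_{\cdot\ell} S_{\cdot\ell}^T$ is a rank-one positive semidefinite matrix and the coefficients $k_\ell x_{i(\ell)}$ are nonnegative linear functions of $x$, so $c$ is PSD-valued, affine in $x$ and constant in $\eta$. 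Affinity provides PSD-convexity for free, and the nonnegativity of the coefficient matrices provides PSD-monotonicity. These are exactly the conditions that Proposition~\ref{prop:icx-mon-sde} places on the diffusion, so applying it to the joint $z$-process yields propagation of the $\cFicx$-order.

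The only delicate point I anticipate is aligning the statement of Proposition~\ref{prop:icx-mon-sde} --- phrased on $\sigma$ as a PSD-valued object --- with the rectangular, square-root-valued LNA dispersion $SV(x)$. The natural reading, consistent with~\cite{bergenthum2007comparison}, is that the hypothesis should be verified at the level of the covariance rate $c = \sigma\sigma^T$; unimolecularity is exactly what cancels the square roots and renders $c$ affine, making the verification essentially a bookkeeping exercise. A bimolecular or higher-order reaction would produce quadratic terms in $c$ and would break this clean argument, which is consistent with the scope of existing icx-comparison results for chemical reaction networks.
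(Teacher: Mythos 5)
Your reduction isolates exactly the two facts the paper verifies --- on the stacked state $z=(x^T,\eta^T)^T$ the drift is $\diag(A,A)\,z$ with $A$ Metzler, and the covariance rate $S\,\diag(v(x))\,S^T$ is affine and PSD-increasing in $x$ --- but the step you flag as ``delicate'' is a genuine gap rather than bookkeeping. Proposition~\ref{prop:icx-mon-sde} places its hypothesis on $\sigma$ itself, and its proof uses that hypothesis essentially: Jensen's inequality is applied to $\sigma$ to obtain $\Exp(\sigma(X_2)|x_1)\gepsd\sigma(\Exp(X_2|x_1))\gepsd\sigma(x_1)$, and only afterwards does one pass to $\sigma\sigma^T$. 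The LNA dispersion $SV(x)$ with $V_{ii}(x)=\sqrt{v_i(x)}$ is entrywise \emph{concave} in $x$, so Jensen runs in the wrong direction (one can have $\Exp[\sqrt{kX}]<\sqrt{kx_1}$ even when $\Exp[X]\ge x_1$), and the stated hypothesis simply fails. Reading the hypothesis ``at the level of $c=\sigma\sigma^T$'' is not an interpretation of Proposition~\ref{prop:icx-mon-sde} but a different proposition, which needs its own one-step comparison argument; as written, your proof has nothing to cite for it.

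That missing argument is precisely what the paper's proof supplies: it does not invoke Proposition~\ref{prop:icx-mon-sde} as a black box but reruns its Euler-discretisation scheme on the stacked process, with one-step map $\cT Z = Z+\Delta\tilde A Z+\tilde B(Z,W)$. The drift is handled by nonnegativity of $I+\Delta\tilde A$ for small $\Delta$ (your Metzler observation), and the noise term is compared directly as a Gaussian conditional law, where by Proposition~\ref{prop:st-o} only the covariance matters; the needed inequality $SV^2(x)S^T\gepsd SV^2(y)S^T$ for $x\succeq_x y$ follows from linearity and nonnegativity of $v$ --- your affine-increasing observation, deployed at a point where the concavity of the square root never enters. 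So your verification of the drift and of the covariance rate is correct and coincides with the paper's; to close the proof you must either prove a variant of Proposition~\ref{prop:icx-mon-sde} whose diffusion hypothesis is stated on $c$ (monotone affine suffices for the Gaussian one-step comparison), or carry out the discretisation for the stacked system as the paper does. Your closing remark about bimolecular reactions is consistent with this picture: there $c$ ceases to be affine in $x$ and the direct covariance comparison is lost.
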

\begin{proof} 
This is an adaptation of the proof of Lemma~2.5 and Theorem~2.6 in~\cite{bergenthum2007comparison}, hence we will only provide a sketch of the proof.  Discretise $[t_0,T]$ into $K$ equidistant points with a distance $\Delta$ between the points. Consider the following discretised process with $t_i\in[t_0, T]$
\begin{align}
\tilde x_{t_{i+1}} &= \tilde x_{t_{i}} + \Delta A \tilde x_{t_{i}} \\
\tilde \eta_{t_{i+1}} &= \tilde \eta_{t_{i}} + \Delta A \tilde \eta_{t_{i}}  + S V(\tilde x_{t_{i}}) W
\end{align}
where $W\sim \cN(0,\Delta I)$. Let 
{\small \begin{gather*}
Z_{t_{i}} = \begin{pmatrix}
\tilde x_{t_{i}} \\ \tilde \eta_{t_{i}}
\end{pmatrix} \tilde A = \begin{pmatrix}
A  &0\\
0 & A 
\end{pmatrix} \tilde B(Z_{t_i}, W) = \begin{pmatrix}
0 \\
S V(\tilde x_{t_{i}}) W
\end{pmatrix} 
\end{gather*}}

First we show the propagation of the $\cFicx$-order by a one-step transition operator $\cT$ of the discrete process, which is as follows
\begin{gather*}
\cT Z = Z + \Delta \tilde A Z + \tilde B(Z, W),
\end{gather*}
Let $Z_{1} = \begin{pmatrix}
x^T & \eta^T
\end{pmatrix}^T$, $Z_{2} = \begin{pmatrix}
y^T & \xi^T
\end{pmatrix}^T$, we need to show that $\cT Z_1 \leicx \cT Z_2$ follows from $Z_1 \leicx Z_2$. Without loss of generality, by Proposition~\ref{prop:st-o} we can pick the variables $Z_1$ and $Z_2$ such that $Z_1 \led\Exp(Z_2 | Z_1)$. Now for a small enough $\Delta$ the matrix $I +\Delta \tilde A$ is nonnegative, hence 
\begin{gather}\label{eq:drift-icx-pf}
\Exp\left[Z_2 + \Delta \tilde A Z_2 | z_1\right] \succeq_x z_1 + \Delta \tilde A z_1.
\end{gather}
for any realisation $z_1$ of the random variable $Z_1$. Consider now the terms $\tilde B(Z_2, W)$ and $\tilde B(z_1, W)$. Note that $v(x)\succeq_x v(y)$ for all $x \succeq_x y$, since $V^2$ is a linear function of $x$, which takes only positive values for positive $x$. This implies that $\xi^T V^2(x) \xi \ge \xi^T V^2(y) \xi$ for any vector $\xi$ and therefore $S V^2(x) S^T \gepsd S V^2(y) S^T$ for all $x \succeq_x y$. Hence by Proposition~\ref{prop:st-o}, we have that $\Exp\left[\tilde B(Z_2,W)| z_1\right] \geicx \tilde B(z_1,W)$, which together with~\eqref{eq:drift-icx-pf} results in
\begin{multline*}
 \Exp\left[Z_2 + \Delta \tilde A Z_2 + \tilde B(Z_2, W) |z_1\right] \geicx\\ z_1 + \Delta \tilde A z_1 + \tilde B(z_1, W)
\end{multline*} 
for any realisation $z_1$ of the random variable $Z_1$. Now by Jensen's inequality and the inequality above, we have 
{\small \begin{gather*}
\Exp h(\cT Z_2) = \Exp\,\Exp \left[h(\cT Z_2 )| Z_1\right] \ge \Exp \left[ h(\Exp \left[\cT Z_2 | Z_1 \right] \right] = \\
\Exp h\left(\Exp \left[Z_2 + \Delta \tilde A Z_2 + \tilde B(Z_2, W) |Z_1 \right] \right) = \\
\Exp \int h\left( \Exp\left[Z_2 + \Delta \tilde A Z_2 + \tilde B(Z_2, W) | z_1\right] \right) P(d z_1)\ge \\
\Exp \int h\left( z_1 + \Delta \tilde A z_1 + \tilde B(z_1, W) \right) P(d z_1) = \Exp h(\cT Z_1),
\end{gather*}}
where $z_1$ is a realisation of the random variable $Z_1$. It remains to show that $\Exp[h(Z_{t_i})| Z_{t_0} = z_0]$ belongs to the $\cFicx$ class for any $t_i$. This part of the proof is equivalent to the proof of Proposition~\ref{prop:icx-mon-sde} and the proofs in~\cite{bergenthum2007comparison} and is therefore omitted.
\end{proof}

This proposition indicates that the $\cFicx$ order is propagated by the mean and the covariance matrix of the process. Hence, by comparing initial distributions, we can compare distributions of the process at any point of time. This result is naturally related to Proposition~\ref{prop:st-o}, where it is shown that the $\cFd$-order is equivalent to the almost surely comparison, while the $\cFicx$-order is equivalent to the comparison of conditional expectations. At the same the $\cFicx$-order is still quite restrictive and the class of networks propagating the $\cFicx$-order can still be too small. However, Proposition~\ref{prop:lna-prop} shows that imposing additional constraints on the propagation class $\cF$ in comparison with $\cFd$ entails a larger class of biochemical network with a certain monotonicity property. It remains to establish, what kind of a class of functions to consider, in order to induce a valuable order.

\section{Conclusion and Discussion}
In this paper a step is taken towards a better understanding of monotonicity (propagation of order) of Markov processes. Some novel results for the deterministic processes are derived, however, the main application in mind is \emph{stochastic} biological processes. In this paper, we make a step forward towards understanding the biological processes propagating the order. We do so by considering Linear Noise Approximation (of a Master Equation), which is a Gaussian process. In this paper, it is shown that any LNA propagating the $\cFd$-order is a collection of decoupled birth-death processes. At the same time any LNA process describing unimolecular reactions is propagating the $\cFicx$-order, which entails that the order is propagated by the mean and the covariance matrix of the process. 
\bibliography{Biblio}
\end{document}